\documentclass[a4paper, 11pt]{amsart}
\usepackage{mathptmx,amssymb,amscd,latexsym, eulervm}
\usepackage{amsmath}
\usepackage{amsthm}
\usepackage{mathdots}
\usepackage[colorlinks=true,citecolor=violet,linkcolor=blue,urlcolor=blue]{hyperref}
\usepackage[dvipsnames]{xcolor}
\usepackage[onehalfspacing]{setspace}
\usepackage{tabularx}
\usepackage{amsfonts}
\usepackage{paralist}
\usepackage{aliascnt}
\usepackage{amscd}
\usepackage{blkarray}
\usepackage{mathbbol}
\usepackage{setspace}
\usepackage{needspace}
\usepackage[inner=2.4cm,outer=2.4cm, bottom=3.2cm]{geometry}
\usepackage{tikz, tikz-cd}
\usepackage{calligra,mathrsfs}

\usepackage{tikz}
\usetikzlibrary{matrix}
\usetikzlibrary{arrows,calc}
\allowdisplaybreaks

\newtheorem{theorem}{Theorem}[section]
\newtheorem{headthm}{Theorem}

\newaliascnt{proposition}{theorem}
\newtheorem{proposition}[proposition]{Proposition}
\aliascntresetthe{proposition}

\newaliascnt{lemma}{theorem}
\newtheorem{lemma}[lemma]{Lemma}
\aliascntresetthe{lemma}

\newaliascnt{corollary}{theorem}

\aliascntresetthe{corollary}

\theoremstyle{definition}
\newaliascnt{construction}{theorem}

\aliascntresetthe{construction}

\newaliascnt{definition}{theorem}
\newtheorem{definition}[definition]{Definition}
\aliascntresetthe{definition}

\newaliascnt{example}{theorem}

\aliascntresetthe{example}

\newaliascnt{remark}{theorem}
\newtheorem{remark}[remark]{Remark}
\aliascntresetthe{remark}

\newaliascnt{question}{theorem}

\aliascntresetthe{question}

\def\equationautorefname~#1\null{(#1)\null}
\def\sectionautorefname~#1\null{Section #1\null}
\def\subsectionautorefname~#1\null{\S #1\null}

\newcommand{\NN}{\mathbb{N}}
\newcommand{\QQ}{\mathbb{Q}}
\newcommand{\ZZ}{\mathbb{Z}}

\newcommand{\PP}{\mathbb{P}}

\newcommand{\HH}{\normalfont\text{H}}
\newcommand{\kk}{\mathbb{k}}
\newcommand{\Supp}{\operatorname{Supp}}
\newcommand{\multProj}{\operatorname{MultiProj}}
\newcommand{\Newt}{\operatorname{Newt}}
\newcommand{\conv}{\operatorname{Conv}}
\newcommand{\rank}{\operatorname{rank}}
\DeclareMathOperator{\rk}{rk}
\newcommand{\codim}{\operatorname{codim}}
\newcommand{\K}{\mathcal{K}}
\newcommand{\LL}{\mathscr{L}}
\newcommand{\EE}{\mathscr{E}}
\newcommand{\OO}{\mathscr{O}}
\newcommand{\G}{\mathfrak{G}}

\newcommand{\zero}{\mathbf{0}}
\newcommand{\one}{\mathbf{1}}
\newcommand{\ba}{\mathbf{a}}
\newcommand{\bc}{\mathbf{c}}
\newcommand{\bd}{\mathbf{d}}
\newcommand{\br}{\mathbf{r}}
\newcommand{\bs}{\mathbf{s}}
\newcommand{\bh}{\mathbf{h}}

\newcommand{\bx}{\mathbf{x}}
\newcommand{\by}{\mathbf{y}}

\newcommand{\bt}{\mathbf{t}}
\newcommand{\bn}{\mathbf{n}}
\newcommand{\bm}{\mathbf{m}}
\newcommand{\bb}{\mathbf{b}}

\newcommand{\ee}{{\normalfont\mathbf{e}}}
\DeclareMathOperator{\MultiProj}{MultiProj}
\DeclareMathOperator{\Spec}{Spec}

\DeclareMathOperator{\Sym}{Sym}
\DeclareMathOperator{\hsupp}{HSupp}
\DeclareMathOperator{\msupp}{MSupp}
\DeclareMathOperator{\dmsupp}{DMSupp}
\DeclareMathOperator{\Ker}{Ker}

\DeclareFontFamily{OT1}{pzc}{}
\DeclareFontShape{OT1}{pzc}{m}{it}{<-> s * [1.100] pzcmi7t}{}
\DeclareMathAlphabet{\mathchanc}{OT1}{pzc}{m}{it}

\DeclareMathOperator{\fProj}{\mathchanc{Proj}}

\title{On the support of double Grothendieck polynomials}
\author{Yairon Cid-Ruiz}
\address{Department of Mathematics, North Carolina State University, Raleigh, NC 27695, USA}
\email{ycidrui@ncsu.edu}
\date{September 13, 2026}
\subjclass[2020]{14M15, 14C17, 13D40, 05E05, 52B40}
\keywords{Double Grothendieck polynomials, matrix Schubert varieties, Hilbert coefficients, rational singularities, standardization, $M$-convex sets, $M^\natural$-convex sets, saturated Newton polytopes}

\begin{document}
\begin{abstract}
We prove that the support of every double Grothendieck polynomial is an $M^\natural$-convex set.
Our main new tool is a rigidity result for the $K$-classes of multiprojective varieties with rational singularities. 
\end{abstract}

\maketitle

\tableofcontents

\section{Introduction}

Grothendieck polynomials are important objects in algebraic combinatorics and algebraic geometry.
They were introduced by Lascoux and Sch\"utzenberger \cite{LS} to represent the classes of structure sheaves of Schubert varieties in the $K$-theory of flag varieties.
These polynomials have been extensively studied (see, e.g., \cite{FK,Lenart,Buch,KM,LRS, Weigandt,PSW} and the references therein).
In this paper, we study the supports of double Grothendieck polynomials.

We first recall the definition of Grothendieck polynomials.
Consider the polynomial ring $\ZZ[\bx, \by] = \ZZ[x_1,\ldots,x_n,y_1,\ldots,y_n]$.
For a permutation $w\in S_n$, denote by $\ell(w)$ the number of inversions.
Let $s_i=(i,i+1)$ be the $i$-th simple transposition and $w_0=(n,n-1,\ldots,1)$ the longest permutation.

\begin{definition}[Grothendieck polynomials]\label{def:grothendieck}
Let $s_i$ interchange $x_i,x_{i+1}$ and fix $\by$, and set
$$
\partial_i(f) \;=\; \frac{f-s_i(f)}{x_i-x_{i+1}} \qquad \text{ and } \qquad
\pi_i(f) \;=\; \partial_i\bigl((1-x_{i+1})f\bigr)
$$
for any $f \in \ZZ[\bx, \by]$.
The \emph{double Grothendieck} polynomials are determined by
$$
\begin{aligned}
\G_{w_0}(\bx;\by)&\;=\; \prod_{i+j\le n}(x_i+y_j-x_iy_j),\\
\G_{ws_i}(\bx;\by)&\;=\; \pi_i\bigl(\G_w(\bx;\by)\bigr)
\quad\text{if \;}\ell(ws_i)\;=\; \ell(w)-1.
\end{aligned}
$$
The \emph{ordinary Grothendieck polynomial} is $\G_w(\bx)=\G_w(\bx;\zero) \in \ZZ[\bx]$.
\end{definition}

For a polynomial $f$ in $N$ variables, let $\Supp(f)\subset\NN^N$ be its exponent support and $\Newt(f):=\conv(\Supp(f))$ be its Newton polytope.
Following \cite{MTY}, we say that $f$ has the \emph{saturated Newton polytope property} (SNP) if every lattice point of $\Newt(f)$ belongs to $\Supp(f)$.
Monical, Tokcan, and Yong \cite{MTY} conjectured this property for many polynomials in algebraic combinatorics.
Several of their conjectures have already been settled positively (see \cite{EY,FMS,MSD,HMMSD,CCMM,HMSSD,KMF,NW}).

\smallskip

In this paper, we study the support of double Grothendieck polynomials, and our main result is the following:

\begin{headthm}\label{thm:main}
For every $w\in S_n$, the support of $\G_w(\bx;\by)$ is an $M^\natural$-convex set.
In particular, double Grothendieck polynomials have the SNP property.
\end{headthm}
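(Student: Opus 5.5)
The plan is to realize $\G_w(\bx;\by)$ geometrically and to read off its support from the Hilbert coefficients of a multiprojective variety with rational singularities. By Knutson--Miller, $\G_w(\bx;\by)$ is the $K$-polynomial of the matrix Schubert variety $\overline{X_w}\subset \mathrm{Mat}_{n\times n}$ with respect to the $\ZZ^{2n}$-grading given by the row and column torus, after the substitution $x_i\mapsto 1-t_i$, $y_j\mapsto 1-s_j^{-1}$ (up to normalization). Since $M^\natural$-convexity in $\NN^{2n}$ is equivalent to $M$-convexity of the homogenized set in $\NN^{2n+1}$ (append the coordinate $d-|\alpha|$, where $d$ is the top degree), I will work with a homogenized object. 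This object will be a multiprojective subvariety $Z\subset \PP^{m_1}\times\cdots\times\PP^{m_k}$ obtained from $\overline{X_w}$ via standardization: replace each variable by a linear form in fresh variables so that the multigraded structure is separated into distinct projective factors. In this setup, the coefficients of $\G_w(\bx;\by)$ (in the $x$-, $y$-basis) become, up to a uniform sign $(-1)^{|\alpha|-\ell(w)}$, the Euler characteristics $\chi(Z,\OO(-\alpha))$ or the associated Hilbert-type coefficients of the $K$-class $[\OO_Z]$ written in the basis $[\OO_{\PP}(-\alpha)]$, equivalently in powers of hyperplane classes $h_i=1-[\OO(-e_i)]$.

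The key steps, in order, are as follows.

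First, I verify that $Z$, and $\overline{X_w}$ itself, has rational singularities. Matrix Schubert varieties are Cohen--Macaulay with rational singularities, since they are products of an affine space with opposite Schubert cells in the flag variety, and standardization and taking multicones preserve this.

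Second, I invoke the paper's rigidity result: for an irreducible multiprojective variety $Z$ with rational singularities, the $K$-class $[\OO_Z]=\sum_\alpha c_\alpha h^\alpha$ satisfies $(-1)^{|\alpha|-\codim Z}c_\alpha\ge 0$. Moreover, $c_\alpha\neq0$ exactly when $\alpha$ lies in the support of the class of some subvariety in the "Segre/dual" sense, namely when $\alpha = \beta+\gamma$ with $\beta$ a multidegree support of $Z$ (an $M$-convex set by Brion / Castillo--Cid-Ruiz--Li--Montaño--Zhang) and $\gamma$ accounting for the lower-dimensional terms given by general hyperplane sections. The expected shape is that $\Supp([\OO_Z])$ equals the union over $k$ of the multidegree supports of $Z\cap$ (general linear sections). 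This union is $M^\natural$-convex because each multidegree support of an irreducible variety is $M$-convex, and these supports are nested in a compatible way: the generic section $Z\cap H_i$ has multidegree support obtained from that of $Z$ by the operations corresponding to $\mathrm{supp}-e_i$.

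Third, I transport this back along standardization. Standardization is a linear substitution that is injective on supports, and I will check that it preserves $M^\natural$-convexity in both directions, because it corresponds to an aggregation or projection of coordinates. Together with the absence of cancellations guaranteed by the alternating signs, this gives that $\Supp(\G_w(\bx;\by))$ is $M^\natural$-convex. SNP then follows formally, since $M^\natural$-convex sets are lattice-point saturated.

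The main obstacle is the second step, namely proving that the full support of the $K$-class of a rational-singularity multiprojective variety is $M^\natural$-convex, and not merely its top-degree part. For this I would first try induction on dimension via general hyperplane sections. Rational singularities are preserved under general sections (by Bertini-type results for rational singularities), and Kawamata--Viehweg-type vanishing $H^i(Z,\OO_Z(-\alpha))=0$ lets the $K$-class of $Z\cap H_i$ be computed from that of $Z$ without cancellation. This makes $\Supp$ of each graded piece equal to a multidegree support of some irreducible section, and the exchange property across degrees then needs to be checked directly.
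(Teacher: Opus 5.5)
Your reduction depends on step two, and that step is not available. You would need the \emph{entire} Hilbert support of an arbitrary multiprojective variety with rational singularities to be $M^\natural$-convex. The paper does not prove this, and your hyperplane-section induction does not yield it either.

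What cutting by generic hyperplanes plus Kawamata--Viehweg vanishing gives (with an extra argument for strictness) is exactly \autoref{thm:rigidity}:
\begin{itemize}
\item $c_\bn\ge 0$;
\item $\hsupp_\PP(X)\subseteq\dmsupp_\PP(X)$;
\item $c_\bn>0$ forces $c_\bm>0$ for every $\bm\ge\bn$ in $\dmsupp_\PP(X)$.
\end{itemize}
Your description of the degree slices as multidegree supports of general linear sections is not right. In fact $c_\bn(X)=h^0(Y,\omega_Y\otimes\LL)$, where $Y$ is the section of $X$ by $n_i$ generic hyperplanes from the $i$-th factor. This vanishes for many nonempty $Y$, so the slices are in general proper subsets of the slices of $\dmsupp_\PP(X)$, and the geometry does not say which.

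Translated back (\autoref{prop:full-box}), rigidity only says two things: every $\ba\in\mathcal S_w$ lies above some element of $B_w$, and $\mathcal S_w$ contains every vector lying between an element of $B_w$ and an element of $\mathcal S_w$. It says nothing about the maximal elements of $\mathcal S_w$. For $w=s_1\in S_3$ one has $\G_{s_1}(\bx;\by)=x_1+y_1-x_1y_1$, yet the conclusions of \autoref{thm:rigidity} would be equally consistent with $x_1x_2$ also lying in the support. The ``exchange property across degrees'' that you defer is precisely the content of the theorem.

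The paper closes this gap with Schubert-specific input that your plan lacks.
\begin{itemize}
\item Every shift $(\br,\bc)$ in the resolution of $R/I_w$ has $|\br|=|\bc|$. Hence $\K(R/I_w;\bt,u\one)=\K(R/I_w;u\bt,\one)$, which reduces the double support to ordinary Grothendieck supports (\autoref{lem:ordinary-domination}). Combined with the $M^\natural$-convexity of ordinary supports (\autoref{prop:ordinary-exchange}), this shows every $\ba\in\mathcal S_w$ lies below an element of $T_w$.
\item The factorization $T_w=M_w\times M_{w^{-1}}$ (\autoref{thm:top-support}) makes $T_w$ $M$-convex. $B_w$ is $M$-convex as a double Schubert support.
\end{itemize}
Together these give $\mathcal S_w=\{\ba\mid \bb\le\ba\le\bt,\ \bb\in B_w,\ \bt\in T_w\}$. $M^\natural$-convexity still needs $\rk_{T_w}-\rk_{B_w}$ to be nondecreasing (\autoref{rem:intermediate-support}). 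The paper checks this blockwise from the ordinary case, and across blocks via submodularity and the product structure of $T_w$. Your phrase ``nested in a compatible way'' is where all of this would have to go.

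A smaller point concerns step three. The standardization used is the bilinear substitution $x_{i,j}\mapsto w_{i,j}z_{i,j}+w'_{i,j}z'_{i,j}$, not a linear one. It keeps the multigraded Betti numbers, and hence the $\ZZ^{2n}$-graded $K$-polynomial, literally unchanged. So there is no aggregation of coordinates to undo, only the reflection $\ba\mapsto\bm-\ba$. Moreover, no linear substitution could turn $\deg x_{i,j}=\ee_i\oplus\ee_j$ into a standard multigrading.
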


Our approach uses the standardization construction of \cite{CCMM,CCC,CLM}.
By Knutson--Miller's formula \cite{KM}, double Grothendieck polynomials are the twisted $K$-polynomials of matrix Schubert varieties.
We use a version of standardization that preserves the full $K$-polynomial and rational singularities.
The main new ingredient is a rigidity theorem for the signed Hilbert coefficients of a multiprojective variety with rational singularities, refining Brion's positivity theorem \cite{Brion} (see \autoref{thm:rigidity}).
Standardization allows us to apply this theorem to the supports of double Grothendieck polynomials.
We then combine the resulting support properties with Nguyen-Dang and Wang's result \cite{NW} that ordinary Grothendieck polynomials have $M^\natural$-convex support and with the top-degree factorization of Pechenik, Speyer, and Weigandt \cite{PSW}.

\medskip

\noindent
\textbf{Convention.}
Throughout this paper, unless otherwise stated, all algebraic varieties are defined over an algebraically closed field $\kk$ of characteristic zero. 

\medskip

\noindent
\textbf{Outline.}
The structure of the paper is as follows.
In \autoref{sec:poly}, we recall basic properties of $M^\natural$-convex sets. 
In \autoref{sec:rigidity}, we prove the rigidity theorem for signed Hilbert coefficients.
In \autoref{sec:standardization}, we develop the standardization construction and apply it to double Grothendieck polynomials.
In \autoref{sec:proof}, we prove the main theorem.
Finally, in \autoref{sec:ordinary}, we give a short, self-contained geometric proof that the positive homogenizations of ordinary Grothendieck polynomials are denormalized Lorentzian, a fact needed in our proof of \autoref{thm:main}.

\section{\texorpdfstring{$M^\natural$}{M-natural}-convex sets}\label{sec:poly}
We recall the properties of $M^\natural$-convex sets used in the proof of the main theorem. 
For more details on these topics, see \cite{Murota}, \cite{Schrijver}, \cite{HH}.

Let $n\geq1$ be a positive integer, and write $[n]:=\{1,\ldots,n\}$.
For two multiindices $\ba=(a_1,\ldots,a_n),\bb=(b_1,\ldots,b_n)\in\ZZ^n$,
we write $\ba\geq\bb$ whenever $a_i\geq b_i$ for all $i\in[n]$,
and define the weight of $\ba$ as $|\ba|:=a_1+\cdots+a_n$.
For each $i\in[n]$, let $\ee_i\in\NN^n$ be the $i$-th standard basis vector,
whose $i$-th entry is $1$ and all other entries are $0$.
We denote by $\zero=(0,\ldots,0)$ and $\one=(1,\ldots,1)$ the zero and
all-ones vectors, respectively.
For variables $\bt=(t_1,\ldots,t_n)$, we write
$\bt^{\ba}:=t_1^{a_1}\cdots t_n^{a_n}$.

\begin{definition}\label{def:discrete-exchange}
A finite nonempty set $B\subseteq\NN^n$ is \emph{$M$-convex} if it satisfies \emph{symmetric exchange}:
for all $\ba,\bb\in B$ and every $i$ with $a_i>b_i$, there exists $j$
with $a_j<b_j$ such that
$$
 \ba-\ee_i+\ee_j \;\in\; B
 \qquad\text{and}\qquad
 \bb+\ee_i-\ee_j \;\in\; B.
$$
Equivalently, we say that $B$ is a \emph{polymatroid}.
A finite nonempty set $S\subseteq\NN^n$ is \emph{$M^\natural$-convex} if
its homogenization
$$
 \widehat S_D \;:=\; \big\{(\ba,D-|\ba|)\mid\ba\in S\big\} \;\subseteq\; \NN^{n+1}
$$
is $M$-convex, where $D\ge\max\big\lbrace |\ba| \,\mid\, {\ba\in S} \big\rbrace$ is an integer.
Equivalently, we say that $S$ is a \emph{generalized polymatroid}.
\end{definition}

\begin{remark}[{\cite[\S5]{KMF}}]\label{prop:exchange-criterion}
To show that a finite nonempty set $S\subseteq\NN^n$ is $M^\natural$-convex, it is enough to verify the following two conditions.
\begin{enumerate}[\rm (i)]
\item \emph{Exchange:} For every pair $(\ba,\bb)\in S\times S$ and
index $i\in\{1,\ldots,n\}$ such that $a_i>b_i$, at least one of the
following conditions holds:
\begin{enumerate}[\rm (1)]
	\item There exists $j\in\{1,\ldots,n\}$ with $a_j<b_j$ such that $\ba-\ee_i+\ee_j\in S$ and $\bb-\ee_j+\ee_i\in S$.
	\item $|\ba|>|\bb|$,  and $\ba-\ee_i\in S$ and $\bb+\ee_i\in S$.
\end{enumerate}
\item \emph{Expansion:} For every pair $(\ba,\bb)\in S\times S$ with
$|\ba|<|\bb|$, there exists an index $j\in\{1,\ldots,n\}$ such that $a_j<b_j$,  and $\ba+\ee_j\in S$ and $\bb -\ee_j \in S$.
\end{enumerate}
\end{remark}

\begin{remark}\label{rem_M_natural_top_bot}
	Every nonempty fixed-degree slice of an $M^\natural$-convex set is $M$-convex. 
	Indeed, two points in such a slice have equal homogenizing coordinates, so symmetric exchange uses only the original coordinates and preserves their total degree.
	Moreover, every point is coordinatewise bounded below by a point of
	minimum total degree and above by a point of maximum total degree.
	Indeed, fix points of minimum and maximum total degree and repeatedly
	apply \autoref{prop:exchange-criterion}(ii) to decrease or increase the
	given point until the respective degree is reached.
\end{remark}

For an $M$-convex set $C\subseteq\NN^n$, the corresponding rank function is given by
$$
\rk_C \;:\; 2^{[n]} \rightarrow \NN, \qquad
 \rk_C(I)\;:=\;\max\Bigg\lbrace \sum_{i\in I}c_i \;\mid\; \bc = (c_1,\ldots, c_n)\in C \Bigg\rbrace.
$$
We will use the following rank criterion.

\begin{lemma}\label{rem:intermediate-support}
Let $B,T\subseteq\NN^n$ be $M$-convex sets.
If the function $\rk_T-\rk_B$ is nondecreasing under inclusion, then the set
$$
 S\;:=\;\big\{\ba\in\NN^n\mid\bb\le\ba\le\bt
       \text{ for some }\bb\in B,\ \bt\in T\big\}
$$
is $M^\natural$-convex.
Conversely, if $B$ and $T$ are the subsets of minimum and maximum total degree of an $M^\natural$-convex set, respectively, then $\rk_T-\rk_B$ is nondecreasing under inclusion.
\end{lemma}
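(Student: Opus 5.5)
The plan is to pass through Frank's theory of generalized polymatroids. An integral g-polymatroid is described by a \emph{paramodular pair} $(p,b)$: $p$ is supermodular, $b$ is submodular, $p(\emptyset)=b(\emptyset)=0$, and the cross inequality
$$b(X)-p(Y)\;\ge\; b(X\setminus Y)-p(Y\setminus X)\qquad\text{for all } X,Y\subseteq[n]$$
holds. The associated set is
$$Q(p,b)=\{x\in\ZZ^n \mid p(X)\le x(X)\le b(X)\ \text{for all } X\},$$
where $x(X)=\sum_{i\in X}x_i$. The integer points of $Q(p,b)$ form an $M^\natural$-convex set, and conversely the pair of bounding functions $X\mapsto\min_{S}x(X)$, $X\mapsto \max_S x(X)$ of an $M^\natural$-convex set $S$ is paramodular. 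These facts can be cited from \cite{Murota,Schrijver}, or deduced from \autoref{def:discrete-exchange} via homogenization. Throughout, write $E=[n]$.

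For the forward direction, I take $b:=\rk_T$ and
$$p(X):=\min_{\bc\in B}\bc(X)=\rk_B(E)-\rk_B(E\setminus X).$$
This identity holds because every point of $B$ has total weight $\rk_B(E)$. Since $\rk_B$ and $\rk_T$ are submodular (rank functions of polymatroids), $p$ is supermodular and $b$ is submodular.

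\textbf{Step 1 (the identity $S=Q(p,b)$).} The inclusion $\subseteq$ is immediate. For the reverse inclusion, I use two standard polymatroid facts. First, an integer vector $x\ge\zero$ with $x(X)\le\rk_T(X)$ for all $X$ is dominated by some basis $\bt\in T$. Second, an integer vector with $x(X)\ge p(X)$ for all $X$ dominates some $\bb\in B$; this is the contrapolymatroid version, obtained by the same greedy argument. Nonnegativity is automatic, since $p(\{i\})\ge0$ because $B\subseteq\NN^n$.

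\textbf{Step 2 (paramodularity).} This is the only real computation. We have
$$p(Y)-p(Y\setminus X)=\rk_B(Z\cup W)-\rk_B(Z),\qquad Z=E\setminus Y,\ W=X\cap Y,$$
and also
$$b(X)-b(X\setminus Y)=\rk_T(X)-\rk_T(X\setminus Y).$$
Since $X\setminus Y\subseteq X$, the monotonicity hypothesis on $\rk_T-\rk_B$ gives
$$\rk_T(X)-\rk_T(X\setminus Y)\;\ge\; \rk_B(X)-\rk_B(X\setminus Y).$$
Next, $X\setminus Y\subseteq Z$ and $X=(X\setminus Y)\sqcup W$. By submodularity (diminishing marginal gains), the right-hand side above is at least $\rk_B(Z\cup W)-\rk_B(Z)$. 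This is exactly the cross inequality. For nonemptiness, note $p(X)\le \rk_B(X)\le\rk_T(X)=b(X)$, using monotonicity from $\emptyset$; in any case $B\subseteq S$. Hence $S$ is $M^\natural$-convex.

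\textbf{Step 3 (converse).} Let $S$ be $M^\natural$-convex with minimum-degree part $B$ and maximum-degree part $T$. By \autoref{rem_M_natural_top_bot}, $B$ and $T$ are $M$-convex. Every point of $S$ lies between a point of $B$ and a point of $T$, so $\max_S x(X)=\rk_T(X)$ and $\min_Sx(X)=\min_B x(X)=\rk_B(E)-\rk_B(E\setminus X)$. Paramodularity of this pair is the cited converse. Apply the cross inequality with $X'=X\cup\{i\}$ and $Y=E\setminus X$. Then $X'\setminus Y=X$ and $Y\setminus X'=E\setminus X'$, so the inequality becomes
$$\rk_T(X')-\rk_T(X)\;\ge\;\rk_B(X')-\rk_B(X),$$
i.e.\ $\rk_T-\rk_B$ is nondecreasing.

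The main obstacle is securing the paramodularity statements cleanly, in particular the converse: that the bound functions of an $M^\natural$-convex set satisfy the cross inequality. If a citation is not acceptable, I would derive it from the homogenization $\widehat S_D$. Using $\rk_{\widehat S_D}$ on subsets of $[n+1]$, one has $b(X)=\rk_{\widehat S_D}(X)$ and $p(X)=D-\rk_{\widehat S_D}(E\cup\{n+1\}\setminus X)$. The cross inequality then reduces to submodularity of $\rk_{\widehat S_D}$.
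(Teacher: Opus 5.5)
Your proof is correct and is essentially the paper's argument written in Frank's paramodular-pair language. The rank function $h$ on $[n+1]$ that the paper builds (equal to $\rk_T$ on subsets of $[n]$, and to $D-d+\rk_B(I)$ on $I\cup\{n+1\}$) is precisely the homogenization of your pair $(p,\rk_T)$. Your cross inequality, with $I=E\setminus Y$ and $J=X$, is literally the paper's submodularity check $\rk_B(I\cup J)-\rk_B(I)\le\rk_T(J)-\rk_T(I\cap J)$; you derive it from submodularity of $\rk_B$, where the paper uses submodularity of $\rk_T$. Likewise, your fallback derivation of the converse from submodularity of $\rk_{\widehat S_D}$ is exactly the paper's observation that $h(I\cup\{n+1\})-h(I)$ is nonincreasing. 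The only real difference is that you cite the g-polymatroid/$M^\natural$ correspondence, while the paper verifies it directly by identifying $\widehat S_D$ with the integer bases of $h$.
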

\begin{proof}
$(\Rightarrow)$ Assume that $\rk_T-\rk_B : 2^{[n]} \rightarrow \NN$ is nondecreasing under inclusion.
Let  $d:=\rk_B([n])$, and $D:=\rk_T([n])$.
Notice that $D - d \ge \rk_T(I) - \rk_B(I) \ge \rk_T(\varnothing) - \rk_B(\varnothing) = 0$ for all $I \subseteq [n]$.
We consider the function $h : 2^{[n+1]} \rightarrow \NN$ given by 
$$
 h(I) \;:= \;\begin{cases}
	\rk_T\left(I\right) & \quad \text{if\; } I\subseteq [n] \\
	D-d+\rk_B\left(I\setminus\{n+1\}\right) & \quad \text{if\; } n+1 \in I.
 \end{cases}
$$
We show that $h$ is a polymatroid rank function.
Clearly, $h$ is integer-valued and $h(\varnothing)=0$.
Monotonicity follows from that of $\rk_B$ and $\rk_T$, together with
$$
 h\left(I\cup\{n+1\}\right)-h\left(I\right)
 \;=\;(D-d)-\bigl(\rk_T(I)-\rk_B(I)\bigr)\;\geq\;0 \qquad \text{ for all } I \subseteq [n].
$$
To check submodularity, only pairs with exactly one set containing $n+1$ need to be considered. 
For $I,J\subseteq[n]$, the hypothesis and submodularity of $\rk_T$ yield
\begin{align*}
	h\left(\left(I \cup \{n+1\}\right) \cup J\right) - h\left(I \cup \{n+1\}\right) &\;=\; \rk_B(I\cup J)-\rk_B(I)\\
	&\;\le\; \rk_T(I\cup J)-\rk_T(I)\\
	&\;\leq\;\rk_T(J)-\rk_T(I\cap J)\\
	&\;\leq\; h(J)-h\left(\left(I \cup \{n+1\}\right) \cap J\right).
\end{align*}
Therefore $h$ is a rank function with $h([n+1])=D$.
Then we have the equality
$$
 \widehat S_D
 \;=\;\Big\{\bc\in\NN^{n+1}\mid |\bc|=D \quad \text{and}\quad
       \sum_{i\in I}c_i\leq h(I)\text{ for all }I\subseteq[n+1]\Big\}.
$$
Indeed, for $\bc=(\ba,D-|\ba|)$, these inequalities are equivalent to
$$
 d-\rk_B([n]\setminus I)
 \;\leq\;\sum_{i\in I}a_i\;\leq\;\rk_T(I)
 \qquad\text{for all }I\subseteq[n].
$$
By the rank descriptions of the upward closure of $B$ and the downward closure of $T$, the lower and upper bounds say precisely that $\bb\leq\ba\leq\bt$ for some $\bb\in B$ and $\bt\in T$.
Finally $\widehat S_D$ is the $M$-convex set with rank function $h$, and so $S$ is $M^\natural$-convex.

$(\Leftarrow)$ Let $S$ be an $M^\natural$-convex set with minimum- and maximum-degree subsets $B$ and $T$, respectively.
Let $d:=\rk_B([n])$ and $D:=\rk_T([n])$, and  $h$ be the rank function of $\widehat S_D$.
By \autoref{rem_M_natural_top_bot}, for all $I \subseteq [n]$, we obtain
$$
 h(I) \;=\;\rk_T(I) \qquad \text{and} \qquad
 h(I\cup\{n+1\}) \;=\; D-d+\rk_B(I).
$$
By submodularity, $h(I\cup\{n+1\})-h(I)$ is nonincreasing under inclusion, and so $\rk_T-\rk_B$ is nondecreasing under inclusion.
\end{proof}

\section{Rigidity of signed Hilbert coefficients}\label{sec:rigidity}

In this section, we study the coefficients of the multigraded Hilbert polynomial of a multiprojective variety with rational singularities.
Equivalently, we study the $K$-class of a multiprojective variety inside the $K$-ring of coherent sheaves on the corresponding multiprojective space.
Our result extends previous work of Brion \cite{Brion} by providing a new rigidity result. 

Consider the product of projective spaces $\PP := \PP_\kk^{m_1}  \times_\kk \cdots \times_\kk \PP_\kk^{m_n} = \multProj(S)$ with multihomogeneous coordinate ring 
$$
S \;:=\; \kk\left[x_{i,j} \mid 1 \le i \le n, \, 0 \le j \le m_i\right] \qquad \text{ where } \deg(x_{i,j}) = \ee_i.
$$ 
Let $X \subset \PP$ be an integral subvariety with rational singularities. 
For the definition and basic properties of rational singularities, see \cite{Elkik,KollaMori}.
Let $d :=\dim(X)$.
The multigraded Hilbert polynomial (see \cite{KT, CR_MIXED_MULT}) is given by
 $$
 P_X(\bt) \;:=\; \chi\left(X,\OO_X(\bt)\right) \;=\; \sum_{i=0}^d {(-1)}^i h^i\left(X, \OO_X(t_1,\ldots,t_n)\right) \;\in\; \QQ[t_1, \ldots, t_n]. 
 $$ 
 We can write this polynomial in the binomial basis
$$
P_X(\bt) \;=\;\sum_{\bn\in\NN^n}e_{\bn}(X)
\prod_{i=1}^n\binom{t_i+n_i}{n_i} \qquad \text{ with } \qquad e_\bn(X) \;\in \; \ZZ.
$$
Then we set $c_{\bn}(X):=(-1)^{d-|\bn|}e_{\bn}(X)$.
The next definition introduces the objects of main interest in this section. 

\begin{definition}
	The \emph{Hilbert support of $X$} is given by the set 
	$$
	\hsupp_\PP(X) \;:=\; \big\{\bn \in \NN^n  \mid e_\bn(X) \neq 0 \big\}.
	$$
	The \emph{multidegree support of $X$} is given by the set 
	$$
	\msupp_\PP(X) \;:=\; \big\{\bn \in \NN^n  \mid |\bn| = d \text{  and } e_\bn(X) \neq 0 \big\}.
	$$
	We also consider the \emph{downward support of $X$} determined by multidegrees
	$$
	\dmsupp_\PP(X) \;:=\; \big\{\bn \in \NN^n  \mid \bn \le \bm \;\text{ for some }\; \bm \in \msupp_\PP(X) \big\}.
	$$
\end{definition}

The main result of this section is the following theorem. 

\begin{theorem}\label{thm:rigidity}
Let $X \subset \PP = \PP_\kk^{m_1} \times_\kk \cdots \times_\kk \PP_\kk^{m_n}$ be a $d$-dimensional integral subvariety with rational singularities. 
Then the following statements hold:
\begin{enumerate}[\rm (i)]
	\item $c_\bn(X) \ge 0$ for all $\bn \in \NN^n$.
	\item $\hsupp_\PP(X) \subseteq \dmsupp_\PP(X)$.
	\item Let $\bn, \bm \in \NN^n$ such that $\bn \le \bm$ and $\bm \in \dmsupp_\PP(X)$.
	Then 
	$$
	c_\bn(X) \;>\;0 \qquad \text{ implies that } \qquad c_\bm(X) \;>\; 0.
	$$
\end{enumerate}
\end{theorem}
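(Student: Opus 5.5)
The plan is to express every coefficient $e_\bn(X)$ as an Euler characteristic on a general multilinear section of $X$, and then use vanishing theorems to force that Euler characteristic to be concentrated in a single cohomological degree.

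\emph{Step 1: the coefficients as Euler characteristics.} For $\bn\in\NN^n$, let $X_\bn := X\cap L_\bn$, where $L_\bn$ is cut out by $n_i$ general linear forms in the $i$-th group of variables, for each $i$. By Bertini-type theorems (general hyperplane sections of a variety with rational singularities again have rational singularities, Elkik), $X_\bn$ is either empty or equidimensional of dimension $d-|\bn|$ with rational singularities. Each general hyperplane $H$ of degree $\ee_i$ gives an exact sequence $0\to\OO_Y(\bt-\ee_i)\to\OO_Y(\bt)\to\OO_{Y\cap H}(\bt)\to 0$. Hence cutting by $H$ applies the difference operator $\Delta_i$ to the Hilbert polynomial. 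In the binomial basis, $\Delta_i$ lowers $n_i$ by one. Moreover $\binom{t+k}{k}$ evaluated at $t=-1$ equals $1$ if $k=0$ and $0$ otherwise. Combining these facts gives
$$
e_\bn(X)\;=\;\chi\bigl(X_\bn,\OO_{X_\bn}(-\one)\bigr).
$$

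\emph{Step 2: part (i).} On each component of $X_\bn$, the line bundle $\OO(\one)$ is ample. Rational singularities give $R\pi_*\OO_{\widetilde X}=\OO$ for a resolution $\pi:\widetilde X\to X_\bn$. Kawamata--Viehweg (or Kodaira) vanishing on $\widetilde X$ for the big and nef bundle $\pi^*\OO(\one)$ then yields $H^j(X_\bn,\OO(-\one))=0$ for $j<d-|\bn|$. Therefore
$$
c_\bn(X)\;=\;h^{d-|\bn|}\bigl(X_\bn,\OO(-\one)\bigr)\;=\;h^0\bigl(X_\bn,\omega_{X_\bn}(\one)\bigr)\;\ge\;0,
$$
where the second equality is Serre duality, valid because rational singularities are Cohen--Macaulay.

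\emph{Step 3: part (ii).} If $e_\bn(X)\neq0$, then $X_\bn\neq\varnothing$. Starting from a component of $X_\bn$ of positive dimension, some projection to a factor $\PP^{m_i}$ has positive-dimensional image. A general hyperplane in that factor therefore still meets this component, in dimension one less. Iterating, we reach $\bm\ge\bn$ with $|\bm|=d$ and $X_\bm$ a nonempty finite set. Then $e_\bm(X)=\chi(\OO_{X_\bm})>0$, i.e., $\bm\in\msupp_\PP(X)$. Conversely, for $\bm\in\dmsupp_\PP(X)$, the same count shows $X_\bm\neq\varnothing$. This is the standard characterization: $X_\bm\ne\varnothing$ if and only if $\bm\in\dmsupp_\PP(X)$.

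\emph{Step 4: part (iii).} Suppose $c_\bn(X)>0$, so there is a nonzero section $s\in H^0(X_\bn,\omega_{X_\bn}(\one))$, and let $\bm\ge\bn$ with $\bm\in\dmsupp_\PP(X)$. Take $X_\bm\subseteq X_\bn$, cut out by $\bm-\bn$ further general linear forms $\ell_1,\dots,\ell_r$. Since these are general Cartier divisors on the Cohen--Macaulay normal variety $X_\bn$, adjunction gives
$$
\omega_{X_\bm}\;\cong\;\omega_{X_\bn}(\bm-\bn)|_{X_\bm}.
$$
Hence $\omega_{X_\bm}(\one)\cong\omega_{X_\bn}(\one+\bm-\bn)|_{X_\bm}$. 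Choose a product $g$ of linear forms of multidegree $\bm-\bn$, chosen independently of the $\ell_j$. Then $s\cdot g$ is a section of $\omega_{X_\bn}(\one+\bm-\bn)$, and we restrict it to $X_\bm$. Since $s$ is nonzero on a dense open subset $U$ of some component $Z$ of $X_\bn$, we must ensure that $X_\bm$ meets $U\cap\{g\neq0\}$. This holds by genericity, provided $Z\cap L$ is nonempty for the relevant general $L$.

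The main obstacle is exactly this last point: ensuring that the component $Z$ carrying $s$, rather than merely some component, survives the further cuts. I would handle it by applying the Step 3 argument to $Z$ itself, i.e., checking that $\bm-\bn$ lies in $\dmsupp$ of $Z$. If this fails for an individual component, I would instead reduce to the case where $X_\bn$ is irreducible. For $|\bn|<d$, this irreducibility follows from Bertini's irreducibility theorem, provided the projections have images of dimension at least $2$ where needed. Once this is settled, the restriction of $s\cdot g$ to $X_\bm$ is nonzero, and therefore $c_\bm(X)=h^0\bigl(X_\bm,\omega_{X_\bm}(\one)\bigr)>0$.
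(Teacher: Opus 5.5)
\textbf{Verdict: genuine gap in Step 4.} Your Steps 1--3 are sound and follow the paper's route: hyperplane sections lower $\bn$, and Kawamata--Viehweg vanishing plus Serre duality identify $c_\bn(X)$ with $h^0(X_\bn,\omega_{X_\bn}(\one))$. The gap is in Step 4, at exactly the point you flag, and neither of your remedies closes it.

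Because you cut with general hyperplanes defined over $\kk$, the section $X_\bn$ is in general reducible. By normality it is a disjoint union of components $Z$, and $c_\bn(X)=\sum_Z h^0(Z,\omega_Z(\one))$. The hypothesis $\bm\in\dmsupp_\PP(X)$ only gives $\bm-\bn\in\dmsupp_\PP(X_\bn)=\bigcup_Z\dmsupp_\PP(Z)$. Running Step 3 on a single $Z$ only restates that $Z$ survives the further cuts if and only if $\bm-\bn\in\dmsupp_\PP(Z)$. It gives no reason why this holds for the component carrying $s$.

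Bertini irreducibility is not available either. Take $X=C\times\PP^1\subset\PP^2\times\PP^1$ with $C$ a smooth conic. Since $\Pi_1(X)$ is a curve, $X_{\ee_1}$ is two disjoint copies of $\PP^1$. You cannot impose that all projections have images of dimension at least $2$. In this example the two components happen to have the same invariants, but you would need an argument that this always holds, for instance a monodromy or Galois-conjugacy argument for the components.

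The missing device is the one the paper uses. Replace $\kk$ by the purely transcendental extension $\kk(z_{i,j})$ and cut with the generic hyperplanes $V(\sum_j z_{i,j}x_{i,j})$. By the generic Bertini theorems of \cite[\S1.5]{FOV}, each successive section is then empty or integral, and it still has rational singularities. Vanishing and duality hold over any field of characteristic zero.

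With integrality in hand, your adjunction argument does work. $X_\bm=X_\bn\cap L$ is nonempty of pure dimension $d-|\bm|$. So it meets the dense open subset of $X_\bn$ where $\omega_{X_\bn}$ is invertible and $s\cdot g$ does not vanish, and the restriction is a nonzero section of $\omega_{X_\bm}(\one)$.

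This would be a genuinely different finish from the paper's. The paper reduces to $\bn=\zero$, $\bm=\ee_i$ and writes $c_{\ee_i}(X)=h^0(\omega_X\otimes\LL\otimes\OO_X(\ee_i))-h^0(\omega_X\otimes\LL)$. It then rules out equality using two sections of $\OO_X(\ee_i)$ with transcendental ratio, together with Cayley--Hamilton. These sections exist because $\dim\Pi_i(X)>0$ by \cite[Theorem A]{CCLMZ}. Your restriction argument avoids the transcendence step and the appeal to \cite{CCLMZ}, at the cost of adjunction for Cohen--Macaulay varieties. As written, however, Step 4 is incomplete.
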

The nonnegativity is Brion's theorem \cite{Brion}, in the form presented in \cite[Theorem 3.6]{KMF}. We follow that proof to obtain the additional strictness.

\begin{proof}
	To avoid non-irreducible possible scenarios, we use the elegant treatment of generic Bertini theorems as developed in \cite[\S 1.5]{FOV}.
	In particular, we replace $\kk$  by the purely transcendental field extension $\kk\left(z_{i,j} \mid 1 \le i \le n,\, 0 \le j \le m_i\right)$.
	Let $H_i = V\left(\sum_{j=0}^{m_i} z_{i,j}x_{i,j}\right) \subset \PP$ be the pullback of the generic hyperplane in the $i$-th factor $\PP_\kk^{m_i}$ of $\PP$.
	Then $X \cap H_i$ is either empty or integral of dimension $d-1$, and we have the equality 
	$$
	e_{\bn}(X) \;=\; e_{\bn-\ee_i}\left(X \cap H_i\right) \qquad \text{ for all \qquad $\bn \ge \ee_i$}.
	$$
	Thus, by successively cutting with generic hyperplanes, we can check the claims in parts (i), (ii), (iii) with the simplification that $\bn = \mathbf{0}$.
	Additionally, part (iii) can be checked with $\bn = \mathbf{0}$ and $\bm = \ee_i$ for some $i=1,\ldots,n$. 
	By using the adaptation of Bertini's theorem for rational singularities \cite[Remark 3.4.11(3)]{FOV}, we obtain that each $X \cap H_i$ also has rational singularities.

	Now part (ii) is clear. 
	Indeed, if $\mathbf{0} \not\in \dmsupp_\PP(X)$, then $\msupp_\PP(X) = \varnothing$, and therefore we should have $X = \varnothing$.
	
	Let $f : \widetilde{X} \rightarrow X$ be a rational resolution of $X$.
	Consider the ample line bundle $\LL := \OO_X(1, \ldots,1)$.
	The pullback line bundle $f^*(\LL)$ is big and nef on $\widetilde{X}$.
	By utilizing the Leray spectral sequence and the Kawamata--Viehweg vanishing theorem (see \cite[\S 5]{EV}), we obtain 
	$$
	\HH^i\Big(X, \LL^{-1}\Big) \;\cong\; \HH^i\Big(\widetilde{X}, f^*(\LL)^{-1}\Big) \;=\; 0
	$$ 
	for all $i < \dim(X)$.
	Therefore, we obtain 
	$$
	c_{\mathbf{0}}(X) \;=\; {(-1)}^{d}P_X\left(-1,\ldots,-1\right) \;=\; \dim_\kk\left(\HH^d\left(X, \LL^{-1} \right)\right) \;\ge\; 0.
	$$
	This shows the claimed nonnegativity statement in part (i).
	
	We assume that $c_{\mathbf{0}}(X) > 0$ and that $\ee_i \in \dmsupp_\PP(X)$. 
	Hence, to conclude the proof, we must show that $c_{\ee_i}(X) > 0$.
	Since $X$ is Cohen--Macaulay, by Serre duality (see \cite[\S III.7]{HARTSHORNE}), we obtain the equalities 
	$$
	c_\mathbf{0}(X) \;=\; h^0\left(X, \omega_X \otimes \LL\right)
	$$
	and 
	$$
	c_{\ee_i}(X) \;=\; h^0\left(X, \omega_X \otimes \LL \otimes \OO_X(\ee_i)\right) - h^0\left(X, \omega_X \otimes \LL\right) ;
	$$
	similarly, since $\LL \otimes \OO_X(\ee_i)$ is ample, we have the vanishing $\HH^j\left(X,\omega_X \otimes \LL \otimes \OO_X(\ee_i) \right)=0$ for all $j \ge 1$.

	Consider the finite dimensional $\kk$-vector spaces 
	$$
	W \;:=\; \HH^0\left(X, \omega_X \otimes \LL\right) \qquad \text{ and } \qquad V \;:=\; \HH^0\left(X, \omega_X \otimes \LL \otimes \OO_X(\ee_i)\right).
	$$
	Let $\Pi_i : \PP \rightarrow \PP_\kk^{m_i}$ be the natural projection to the $i$-th factor.
	Since there is some $\bd = (d_1,\ldots,d_n) \in \msupp_\PP(X)$ with  $d_i > 0$, \cite[Theorem A]{CCLMZ} implies that $\dim\left(\Pi_i(X)\right) > 0$.
	Hence we get global sections $s_0, s_1 \in \HH^0(X, \OO_X(\ee_i))$ such that the rational function
	$$
	r \;:=\; \frac{s_1}{s_0} \;\in\; K(X) \quad \text{ is transcendental over $\kk$.}
	$$
	Since $\omega_X \otimes \LL$ is torsion-free of rank one, we have an injection $\omega_X \otimes \LL \hookrightarrow \omega_X \otimes \LL \otimes \mathscr{K}_X \cong \mathscr{K}_X$, where $\mathscr{K}_X$ is the constant sheaf on $X$ with values in $K(X)$.
	
	Assume by contradiction that $0 = c_{\ee_i}(X) = \dim_\kk(V) - \dim_\kk(W)$.
	This yields that
	$$
	W \;\xrightarrow{\;\cong\;}\; s_0 \cdot  W \;=\; V
	$$ 
	and 
	$$
	W \;\xrightarrow{\;\cong\;}\; s_1 \cdot  W \;=\; V. 
	$$
	Dividing by $s_0$ and using the embedding $W \subset K(X)$, we obtain 
	$$
	W \;=\; r \cdot  W.
	$$
	Finally, the Cayley--Hamilton theorem yields the contradiction that $r$ is algebraic over $\kk$.
	Therefore, $c_{\ee_i}(X) >0$, and so the proof of part (iii) is complete. 
\end{proof}

\section{Standardization and support rigidity}\label{sec:standardization}
In this section, we use the process of standardization developed in
\cite{CCMM,CCC,CLM} to apply \autoref{thm:rigidity} to double
Grothendieck polynomials. More precisely, we use a slight variation of the construction in
\cite[Setup~7.1 and the proof of Proposition~7.9]{CCC}, and show that it
preserves rational singularities and the entire $K$-polynomial.

Let
$$
 R\;:=\;\kk\left[x_{i,j}\mid 1\leq i,j\leq n\right]
 \qquad \text{ and } \qquad 
 S\;:=\;\kk\left[w_{i,j},z_{i,j},w'_{i,j},z'_{i,j}
 \mid 1\leq i,j\leq n\right].
$$
Following the notation of \cite[Setup~4.1]{CCMM}, we consider $R$ and $S$
as $(\ZZ^n\oplus\ZZ^n)$-graded rings by setting
$$
 \deg(x_{i,j})=\ee_i\oplus\ee_j,\qquad
 \deg(w_{i,j})=\deg(w'_{i,j})=\ee_i\oplus\zero,\qquad
 \deg(z_{i,j})=\deg(z'_{i,j})=\zero\oplus\ee_j.
$$
We define the multigraded $\kk$-algebra homomorphism
\begin{equation}\label{eq:two-term}
 \phi:R\longrightarrow S,\qquad
 x_{i,j}\longmapsto w_{i,j}z_{i,j}+w'_{i,j}z'_{i,j}.
\end{equation}
For an $R$-homogeneous ideal $I\subset R$, we say that the extension
$J:=\phi(I)S\subset S$ is the \emph{standardization} of $I$ under $\phi$.
Let $\bt=(t_1,\ldots,t_n)$ and $\bs=(s_1,\ldots,s_n)$ be variables
indexing the two summands of the grading. Recall that, if $F_\bullet$ is a
finite multigraded free $R$-resolution of $R/I$ with
$F_p=\bigoplus_qR(-\ba_{p,q},-\bb_{p,q})$, then its \emph{$K$-polynomial} is
$$
 \K(R/I;\bt,\bs)\;:=\;
 \sum_{p,q}(-1)^p\bt^{\ba_{p,q}}\bs^{\bb_{p,q}}.
$$
The polynomial $\K(R/I;\one-\bx,\one-\by)$ is called the
\emph{twisted $K$-polynomial}. We use the same notation for quotients of
$S$, so that their $K$-polynomials belong to the same polynomial ring
$\ZZ[\bt,\bs]$.
The following proposition provides the properties of standardization that
we need (cf.~\cite[Proposition~4.2]{CCMM},
\cite[Theorem~7.2(i)--(iii)]{CCC}, and \cite[Theorem~3.4]{CLM}).
\begin{proposition}\label{prop:standardization}
The homomorphism $\phi$ is faithfully flat. Let $I\subset R$ be a proper
$R$-homogeneous ideal and $J=\phi(I)S$ be its standardization.
Then the following statements hold:
\begin{enumerate}[\rm (i)]
\item $\codim(I)=\codim(J)$.
\item $R/I$ and $S/J$ have the same multigraded Betti numbers. In particular,
$
 \K(R/I;\bt,\bs)=\K(S/J;\bt,\bs).
$
\item If $R/I$ is a domain with rational singularities, then $S/J$ is
also a domain with rational singularities.
\end{enumerate}
\end{proposition}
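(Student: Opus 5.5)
The plan is to factor $\phi$ into two elementary steps and check each property step by step. First, adjoin independent variables: $R \to R' := R[w_{i,j}, z_{i,j}, w'_{i,j}, z'_{i,j}]$ is a polynomial extension, hence faithfully flat. Second, observe that $S$ is isomorphic, as a multigraded $R$-algebra via $\phi$, to a polynomial ring over $R$. Concretely, invert nothing globally, but note that
$$
S \;\cong\; R'/\bigl(x_{i,j}-w_{i,j}z_{i,j}-w'_{i,j}z'_{i,j}\bigr)_{i,j}.
$$
The elements $x_{i,j}-w_{i,j}z_{i,j}-w'_{i,j}z'_{i,j}$ form a regular sequence on $R'$, and indeed on $R'/IR'$ for every $I$: modulo them, each $x_{i,j}$ is eliminated.

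Better still, $\phi$ is faithfully flat because the fibers are easy to describe. $S$ is a free $R$-module. To see this, fix a monomial order in which $w_{i,j}z_{i,j}$ is the leading term of $\phi(x_{i,j})$. Then $S$ is spanned over $R$ by the monomials not divisible by any $w_{i,j}z_{i,j}$, and these are $R$-linearly independent by a leading-term argument. Alternatively, apply local flatness: $\phi$ is a map of polynomial rings whose fibers all have dimension $4n^2-n^2$ (the map $(w,z,w',z')\mapsto wz+w'z'$ is surjective with equidimensional fibers). Miracle flatness (Cohen--Macaulay source and regular target, fiber dimension equal to the difference of dimensions) then gives flatness. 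Faithful flatness follows since $\phi$ is graded and $\phi(\mathfrak m_R)S\ne S$.

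Given faithful flatness, (ii) follows by tensoring a minimal multigraded free resolution $F_\bullet$ of $R/I$ with $S$. Flatness keeps the complex exact, and $\phi$ preserves the $(\ZZ^n\oplus\ZZ^n)$-degrees, so $F_\bullet\otimes_R S$ is a multigraded free resolution of $S/J$ with the same twists. It remains minimal because $\phi(\mathfrak m_R)\subseteq\mathfrak m_S$. This gives equality of Betti numbers and of $K$-polynomials. For (i), use that flat extensions satisfy $\operatorname{ht}(\mathfrak q)=\operatorname{ht}(\mathfrak q\cap R)+\dim$ of the fiber. With equidimensional fibers of constant dimension $3n^2$ and $\dim S-\dim R=3n^2$, this yields $\codim J=\codim I$. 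Alternatively, (i) follows from (ii), since codimension is the lowest degree of the $K$-polynomial's expansion in $1-\bt,1-\bs$, by Knutson--Miller.

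For (iii), the main obstacle, write $S/J = R/I\otimes_R S$. This is the ring of the fiber product of $\Spec(R/I)\to \mathbb A^{n^2}$ with the map $\Psi:\mathbb A^{4n^2}\to\mathbb A^{n^2}$. The map $\Psi$ is flat, and all of its fibers are products of the affine quadric cones $\{wz+w'z'=c\}$. For $c\ne0$ these are smooth. For $c=0$ the fiber is the cone $\{wz+w'z'=0\}\times\mathbb A^2$, a normal toric hypersurface and hence rational singularities; it is integral because $wz+w'z'$ is irreducible. I would then invoke the standard result that if $A\to B$ is flat with fibers having rational singularities (geometrically) and $A$ has rational singularities, then $B$ does (Elkik). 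That handles rational singularities.

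For the domain property, $\Spec(S/J)\to\Spec(R/I)$ is flat with geometrically integral fibers over an integral base. Hence $S/J$ is reduced and has a unique minimal prime lying over $(0)$, so it is a domain. I expect the fiber analysis at $c=0$ and the correct citation of the flat-base-change theorem for rational singularities to be the delicate step.
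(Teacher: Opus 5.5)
Your proposal is correct and follows the paper's proof. You establish faithful flatness of $\phi$, get (i)--(ii) by base-changing a minimal multigraded resolution (which the paper delegates to \cite{CCC}), and get (iii) from closed fibers that are products of the quadrics $V(wz+w'z'-a)\subset\mathbb{A}^4$, Elkik's theorem, and a geometrically integral generic fiber for the domain property. The only real variation is that you prove flatness via an explicit standard-monomial $R$-basis (or miracle flatness), rather than the paper's remark that $\kk[w,z,w',z']$ is torsion-free over the PID $\kk[x]$.

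There are two harmless slips. First, the fiber over $c=0$ is the three-dimensional cone $V(wz+w'z')\subset\mathbb{A}^4$ itself, not that cone times $\mathbb{A}^2$; the extra factor would contradict your own fiber-dimension count of $3n^2$. Second, your opening claim that the $x_{i,j}-w_{i,j}z_{i,j}-w'_{i,j}z'_{i,j}$ are regular on $R'/IR'$ for every $I$ amounts to $\operatorname{Tor}^R_{>0}(R/I,S)=0$, i.e.\ to flatness itself, so ``each $x_{i,j}$ is eliminated'' does not justify it; fortunately you never use it.
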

\begin{proof}
We first consider the homomorphism
$$
 \kk[x]\longrightarrow\kk[w,z,w',z'],\qquad x\longmapsto wz+w'z'.
$$
The ring $\kk[w,z,w',z']$ is torsion-free over the principal ideal domain $\kk[x]$, so this homomorphism is flat. 
Also, its fibers are nonempty, and hence it is faithfully flat. 
By taking the product of these morphisms, we obtain that $\phi$ is faithfully flat.
Then parts (i) and (ii) follow by the same arguments as in the proof of \cite[Theorem~7.2(i)--(iii)]{CCC}.

We now prove part (iii). 
Let $A:=R/I$ and $B:=S/J$, and suppose that $A$ is a domain with rational singularities.
 Since $B\cong S\otimes_RA$, the morphism 
 $$
f \;:\; \Spec(B) \;\longrightarrow\; \Spec(A)
 $$ 
 is obtained from $\Spec(S)\longrightarrow\Spec(R)$ by base change. 
 In particular, $f$ is faithfully flat. 
Consider a closed point $\Spec(\kk) \rightarrow \Spec(A)$. 
Let $a_{i,j}\in\kk$ be the image of $x_{i,j}$. 
We have
$$
 B\otimes_A\kk
 \;\cong\;S\otimes_R\kk
 \;\cong\;\bigotimes_{1\leq i,j\leq n}
 \frac{\kk[w,z,w',z']}{(wz+w'z'-a_{i,j})},
$$
where the tensor products on the right are taken over $\kk$.
Thus the closed fiber is a product of the integral quadrics
$Q_a:=V(wz+w'z'-a)\subset\mathbb A^4_\kk$.

If $a\neq0$, then $Q_a$ is smooth.
On the other hand, $Q_0$ has rational singularities (e.g., it is a normal toric variety).
Since rational singularities are preserved under products, we conclude that every closed fiber of $f : \Spec(B)\longrightarrow\Spec(A)$ is integral and has rational singularities.

Since $A$ is a domain and $B$ is flat over $A$, we have an injection
$
 B \hookrightarrow B\otimes_A\operatorname{Quot}(A).
$
The same quadric description over an algebraic closure of $\operatorname{Quot}(A)$ shows that the generic fiber is geometrically integral, and therefore $B$ is a domain (see \cite[\href{https://stacks.math.columbia.edu/tag/0BCM}{Tag 0BCM}]{stacks-project}). 
Finally, every closed point of $\Spec(B)$ maps to a closed point of $\Spec(A)$. 
By applying Elkik's theorem \cite[Th\'eor\`eme~5]{Elkik} at these closed points, we obtain that $B$ has rational singularities at every closed point, and hence everywhere.
This completes the proof of part (iii).
\end{proof}

We now apply this construction to Schubert determinantal ideals. 
For any permutation $w\in S_n$, let $I_w\subset R$ be the ideal generated by the minors of size $r_w(i,j)+1$ of the northwest $i\times j$ submatrix of the generic matrix $\big(x_{i,j}\big)_{1\leq i,j\leq n}$, where
$
 r_w(i,j):=\#\big\{k\leq i\mid w(k)\leq j\big\}.
$
By \cite[Theorem~A and Proposition~3.2.5]{KM}, the ideal $I_w$ is prime
of codimension $\ell(w)$. The ring $R/I_w$ has rational singularities (see \cite[Theorem~2.4.3 and the following paragraph]{KM}).
Moreover, by Knutson--Miller's formula \cite[Theorem~A]{KM}, we obtain
\begin{equation}\label{eq:fullK}
 \K(R/I_w;\one-\bx,\one-\by)\;=\;\G_w(\bx;\by).
\end{equation}
Here, we are translating Knutson--Miller's grading \cite{KM} to our positive grading in this section (see \cite[Lemma~3.3]{CCMM}).

\begin{definition}
	Let $\mathcal{S}_w:=\Supp(\G_w(\bx;\by)) \subset \NN^{2n}$ be the support of the double Grothendieck polynomial $\G_w$.
	Let $B_w$ be the subset of $\mathcal{S}_w$ of minimum total degree (i.e., $B_w$ is the support of the double Schubert polynomial of $w$).
	Let $T_w$ be the subset of $\mathcal{S}_w$ of maximum total degree.
\end{definition}

The following proposition gives the support properties that we need in the proof of the main theorem.

\begin{proposition}\label{prop:full-box}
The following statements hold:
\begin{enumerate}[\rm (i)]
\item Every $\ba\in\mathcal S_w$ satisfies $\bb\leq\ba$ for some
$\bb\in B_w$.
\item Let $\ba\in\mathcal S_w$, $\bb\in B_w$, and
$\ba'\in\NN^{2n}$ be such that $\bb\leq\ba'\leq\ba$.
Then $\ba'\in\mathcal S_w$.
\end{enumerate}
\end{proposition}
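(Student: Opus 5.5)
The plan is to realize the support $\mathcal S_w$, after an affine order-reversing change of coordinates, as the Hilbert support of a multiprojective variety with rational singularities. Both parts then follow formally from \autoref{thm:rigidity}.

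\emph{Step 1: the variety.} Take $J_w:=\phi(I_w)S$. By \autoref{prop:standardization}, $S/J_w$ is a domain with rational singularities, and $\K(S/J_w;\bt,\bs)=\K(R/I_w;\bt,\bs)$. Combined with \eqref{eq:fullK}, this gives $\K(S/J_w;\one-\bx,\one-\by)=\G_w(\bx;\by)$.

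In the $(\ZZ^n\oplus\ZZ^n)$-grading, $S$ is the standard multigraded coordinate ring of $\PP:=(\PP^{2n-1}_\kk)^{2n}$. The $i$-th row factor has coordinates $w_{i,j},w'_{i,j}$, and the $j$-th column factor has coordinates $z_{i,j},z'_{i,j}$. Let $X:=\MultiProj(S/J_w)\subset\PP$. It is integral because $J_w$ is prime. I expect $X$ to be nonempty and not contained in the irrelevant locus, since $J_w$ is generated by polynomials in the $\phi(x_{i,j})$ and so contains no power of any row or column variable set.

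$X$ has rational singularities. The open subset of $\Spec(S/J_w)$ lying over $X$ is a principal $\mathbb G_m^{2n}$-bundle over $X$, and rational singularities descend along smooth surjective morphisms. Let $d:=\dim X=\dim(S/J_w)-2n$.

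\emph{Step 2: dictionary between $\Supp$ and $\hsupp$.} The multigraded Hilbert series is
$$\K(S/J_w;\bt,\bs)\Big/\prod_i(1-t_i)^{2n}\prod_j(1-s_j)^{2n}.$$
Substitute $\bt=\one-\bx$, $\bs=\one-\by$, and write $\G_w=\sum_{\ba}g_{\ba}\bx^{\ba_1}\by^{\ba_2}$. Each monomial term contributes
$$g_\ba\prod_i(1-t_i)^{-(2n-a_i)}\prod_j(1-s_j)^{-(2n-a'_j)}.$$
This has Hilbert function $g_\ba\prod\binom{t_i+2n-1-a_i}{2n-1-a_i}\cdots$. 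All exponents of $\G_w$ are at most $n-1<2n$, so every exponent $2n-1-a_i$ is nonnegative. Hence this expression is already a polynomial agreeing with the Hilbert function for all large degrees, and therefore
$$e_{\bn}(X)=g_{\ba}\qquad\text{where}\qquad \bn=(2n-1)\one-\ba.$$
The map $\ba\mapsto\bn$ is an order-reversing bijection carrying $\mathcal S_w$ onto $\hsupp_\PP(X)$, and $|\bn|=2n(2n-1)-|\ba|$. The points of maximal $|\bn|$ in $\hsupp_\PP(X)$ have $|\bn|=d$, because the top-degree part of the Hilbert polynomial is nonzero in degree $\dim X$. They therefore correspond exactly to the minimum-degree points of $\mathcal S_w$. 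Thus $\msupp_\PP(X)$ corresponds to $B_w$, and $\dmsupp_\PP(X)$ corresponds to the upward closure of $B_w$.

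\emph{Step 3: conclude.}
\begin{itemize}
\item Part (i) is the translation of \autoref{thm:rigidity}(ii), namely $\hsupp_\PP(X)\subseteq\dmsupp_\PP(X)$.
\item For part (ii), let $\bn,\bn',\bm$ be the images of $\ba,\ba',\bb$. Then $\bn\le\bn'\le\bm$ with $\bm\in\msupp_\PP(X)$, so $\bn'\in\dmsupp_\PP(X)$.
\item Since $e_\bn(X)\neq0$, \autoref{thm:rigidity}(i) gives $c_\bn(X)>0$. Then \autoref{thm:rigidity}(iii) gives $c_{\bn'}(X)>0$, that is, $\ba'\in\mathcal S_w$.
\end{itemize}

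The main obstacle is the bookkeeping in Step 2. One must check that the Hilbert polynomial really equals the binomial expansion read off from the $K$-polynomial, using the degree bound on $\G_w$. One must also check that $\dim X$ matches the codimension data, so that the top Hilbert coefficients are exactly those indexed by $B_w$; this uses \autoref{prop:standardization}(i) and $\codim I_w=\ell(w)$. The descent of rational singularities from the affine cone to $X$ is standard but should be stated explicitly.
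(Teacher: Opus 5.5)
Your proposal is correct and matches the paper's proof: the same standardized variety $\MultiProj(S/J_w)\subset(\PP^{2n-1}_\kk)^{2n}$, the same dictionary $g_{\ba}=e_{\bm-\ba}$ with $\bm=(2n-1)\one$, and the same formal appeal to \autoref{thm:rigidity}(i)--(iii). The one step you only assert, the bound $\deg_{x_i}\G_w,\deg_{y_j}\G_w\le n-1$, is proved in the paper via the square-free initial ideal and the Taylor resolution; it also follows by a short induction on the $\pi_i$ recursion. Conversely, you supply the descent of rational singularities along the $\mathbb G_m^{2n}$-torsor, which the paper leaves implicit.
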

\begin{proof}
Let $J_w:=\phi(I_w)S$ be the standardization of $I_w$, and set $A_w:=S/J_w$. 
Let $m:=2n-1$, $\bm:=(m,\ldots,m)\in\NN^{2n}$ and $\PP:=(\PP_\kk^m)^{2n}$.
By \autoref{prop:standardization}(iii), $A_w$ is a domain with rational singularities. 
From \autoref{prop:standardization}(ii) and \autoref{eq:fullK}, we obtain
$$
 \K(A_w;\one-\bx,\one-\by)\;=\;\G_w(\bx;\by).
$$
Let $Y_w:=\MultiProj(A_w)\subset\PP$. 
Then $Y_w$ is integral with rational singularities, and
$$
 d \;:=\; \dim(Y_w) \;=\; \dim(A_w)-2n \;=\; |\bm|-\ell(w).
$$

We next compare the coefficients of the twisted $K$-polynomial with the Hilbert
coefficients of $Y_w$.
We write 
$$
 \G_w(\bx;\by)\;=\;
 \sum_{\zero\leq\ba = (\ba_x, \ba_y)}g_{\ba}\bx^{\ba_x}\by^{\ba_y}.
$$
The Hilbert series of $A_w$ is therefore
$$
 \operatorname{Hilb}_{A_w}(\bt,\bs)
 \;=\;
 \frac{\G_w(\one-\bt;\one-\bs)}
 {\prod_{i=1}^n(1-t_i)^{m+1}\prod_{j=1}^n(1-s_j)^{m+1}}.
$$
By \cite[Theorem~B]{KM}, the ideal $I_w \subset R$ has a square-free monomial initial ideal.
Since $r_w(n,j)=j$ and $r_w(i,n)=i$, the generators of $I_w$ do not involve the last row or column, and neither do the minimal generators of this initial ideal.
The Taylor resolution of the quotient by this initial ideal has square-free shifts, each involving at most $n-1$ variables from any row or column.
Since Gr\"obner degeneration preserves the $K$-polynomial, $\G_w$ has degree at most $n-1$ in each variable.
As $n-1\leq m$, we have $g_{\ba}=0$ unless $\zero\leq\ba\leq\bm$.
Expanding the Hilbert series as in the proof of \cite[Lemma~7.2]{KMF}, we obtain
$$
 P_{Y_w}(\bt,\bs)\;=\;
 \sum_{\zero\leq\ba=(a_1,\ldots,a_{2n})\leq\bm}g_{\ba}
 \prod_{i=1}^n\binom{t_i+m-a_i}{m-a_i}
 \prod_{j=1}^n\binom{s_j+m-a_{n+j}}{m-a_{n+j}}.
$$
By comparing this expression with the binomial expansion in \autoref{sec:rigidity}, we obtain
\begin{equation}\label{eq:K-Hilbert-coefficients}
 g_{\ba}\;=\;e_{\bm-\ba}(Y_w)
 \;=\;(-1)^{|\ba|-\ell(w)}c_{\bm-\ba}(Y_w).
\end{equation}
In particular,
$$
 \mathcal S_w\;=\;\bm-\hsupp_\PP(Y_w)\qquad \text{ and } \qquad
 B_w\;=\; \bm-\msupp_\PP(Y_w).
$$
Finally, the claims of the proposition follow from \autoref{thm:rigidity}.
\end{proof}

We conclude this section by comparing the supports of double and ordinary Grothendieck polynomials. 
This comparison is motivated by \cite[Corollary 2.5]{PSW}.
Let
$$
 G_w^+(\bx;\by)\;:=\;(-1)^{\ell(w)}\G_w(-\bx;-\by) \;=\; (-1)^{\ell(w)}  \K(R/I_w;\one+\bx,\one+\by).
$$
By \autoref{eq:K-Hilbert-coefficients} and \autoref{thm:rigidity}(i),
the coefficients of $G_w^+(\bx;\by)$ are nonnegative. Moreover,
$\Supp(G_w^+(\bx;\by))=\Supp(\G_w(\bx;\by))$. We also set
$G_w^+(\bx):=G_w^+(\bx;\zero)$.

\begin{lemma}\label{lem:ordinary-domination}
Let $\ba = (\ba_x,\ba_y)\in \mathcal{S}_w = \Supp(\G_w(\bx;\by))$. 
Then there exist
$$
 \mathbf p\in\Supp(\G_w(\bx))
 \qquad\text{and}\qquad
 \mathbf q\in\Supp(\G_{w^{-1}}(\by))
$$
such that $\ba_x\leq\mathbf p$ and $\ba_y\leq\mathbf q$.
If $\ba = (\ba_x,\ba_y)$ has minimum total degree in the double support,
then $\mathbf p$ and $\mathbf q$ can be chosen to have minimum total
degree in the corresponding ordinary supports.
\end{lemma}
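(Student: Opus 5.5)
The plan is to reduce the statement to a positive Cauchy-type expansion of the double Grothendieck polynomial into products of ordinary ones, and then to control the supports of the factors that occur.

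First I will invoke the Fomin--Kirillov Cauchy formula, written in positive form using the substitution defining $G_w^+$. In that form it reads
$$
G_w^+(\bx;\by)\;=\;\sum_{(u,v)} G_u^+(\bx)\,G_{v}^+(\by),
$$
where the sum runs over pairs $(u,v)\in S_n\times S_n$ whose Demazure product $v^{-1}\star u$ equals $w$. Since $G_w^+(\bx;\by)$ has nonnegative coefficients by \autoref{eq:K-Hilbert-coefficients} and \autoref{thm:rigidity}(i), and each $G_u^+$ has nonnegative coefficients by the same argument applied to $\by=\zero$, no cancellation can occur. Consequently every $\ba=(\ba_x,\ba_y)\in\mathcal S_w$ decomposes as $\ba_x\in\Supp(\G_u(\bx))$ and $\ba_y\in\Supp(\G_{v}(\by))$ for some admissible pair $(u,v)$. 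Properties of the Demazure product give $u\le w$ and $v\le w^{-1}$ in Bruhat order.

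Second, I will prove a monotonicity lemma. It states that if $u\le w$ in Bruhat order, then every point of $\Supp(\G_u(\bx))$ is dominated coordinatewise by a point of $\Supp(\G_w(\bx))$. For minimum degrees, it should state that every point of the Schubert support of $u$ is dominated by a point of the Schubert support of $w$. The route I will try first is geometric, using the standardization of \autoref{prop:full-box} for ordinary matrix Schubert varieties (grading only by rows). The containment $\overline{X_w}\subseteq \overline{X_u}$ passes to the standardized varieties $Y_w\subseteq Y_u$. Under $\mathcal S=\bm-\hsupp$, the sets $\dmsupp$ are governed, via \cite[Theorem A]{CCLMZ}, by the dimensions of projections to the coordinate factors. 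I would then compare the upward closures $\bm-\dmsupp$ through these projection dimensions, using \autoref{thm:rigidity}(ii) to identify $\bm-\dmsupp(Y_w)$ with the upward closure of $B_w$. The step I distrust is the direction of the inequality: dominating points of $\Supp(\G_u)$ from above by points of $\Supp(\G_w)$ may require the \emph{top-degree} parts, not the upward closure of $B_w$. In that case I would instead use the factorization of \cite{PSW} for the top-degree component, together with \autoref{rem_M_natural_top_bot} applied to the $M^\natural$-convex ordinary supports from \cite{NW}, to reduce to comparing the top-degree polymatroids of $u$ and $w$.

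Third, for the minimum-degree statement: if $\ba$ has minimum total degree $\ell(w)$, then the contributing pair must satisfy $\ell(u)+\ell(v)=\ell(w)$, since otherwise the degree would exceed $\ell(w)$. The expansion therefore restricts to the Cauchy formula for double Schubert polynomials, and $u$ and $v^{-1}$ are weak-order prefixes of a reduced factorization of $w$. For these I expect a direct argument with divided differences. I will show that applying $\partial_i$ to a Schubert polynomial keeps its support dominated by that of the polynomial it came from, and then take $\mathbf p$ in the minimum-degree part of $\Supp(\G_w(\bx))$, which is the support of the Schubert polynomial of $w$.

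The main obstacle is the monotonicity lemma in the second step. The Cauchy decomposition and positivity are standard, but obtaining coordinatewise domination from Bruhat comparison is the genuinely new input, and I expect most of the work to go there.
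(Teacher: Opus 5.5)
\textbf{There is a genuine gap: the monotonicity lemma in your second step is false as stated, so neither of your routes can prove it.}

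Your first step is sound. In positive form the Fomin--Kirillov expansion has no cancellation, so every $\ba\in\mathcal S_w$ has $\ba_x\in\Supp(\G_u(\bx))$ for some $u$ occurring in the expansion.

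Here is a counterexample in $S_3$. Take $u=132=s_2$ and $w=312=s_2s_1$. Then $u=ws_1<w$ in Bruhat order. We have $\G_u(\bx)=x_1+x_2-x_1x_2$ and $\G_w(\bx)=x_1^2$. The point $(1,1)\in\Supp(\G_u(\bx))$ is not below $(2,0)$, which is the only point of $\Supp(\G_w(\bx))$. At minimum degree, $\mathfrak S_u=x_1+x_2$ and $(0,1)\not\le(2,0)$.

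The same example breaks the claim in your third step. We have $\partial_1(x_1^2)=x_1+x_2$, so applying $\partial_i$ does not keep supports dominated.

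Your worry about the direction of the inequality was justified. The geometric route through $\dmsupp$ and projection dimensions only controls lower bounds.

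The Demazure product actually gives more than Bruhat comparison. From $v^{-1}\star u=w$ you get $w=a'u$ with $a'\le v^{-1}$ and $\ell(w)=\ell(a')+\ell(u)$. So only length-additive right factors $u$ of $w$ occur. For $w=312$ these are $e$, $s_1$ and $w$, and indeed $\G_{312}(\bx;\by)=(x_1+y_1-x_1y_1)(x_1+y_2-x_1y_2)$.

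Domination for such right factors is true, but it is essentially a restatement of the lemma. Your tools do not reach it. The operator $\partial_i$ in $\bx$ realizes $w\mapsto ws_i$, which deletes letters on the wrong side. The PSW top-degree fallback reduces to the same unproved comparison between $M_u$ and $M_w$.

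\textbf{The missing idea, used in the paper, avoids the Cauchy formula entirely.} Since $\deg(x_{i,j})=\ee_i\oplus\ee_j$, every multigraded shift $(\br,\bc)$ satisfies $|\br|=|\bc|$. Hence $\K(R/I_w;\bt,\lambda\one)=\K(R/I_w;\lambda\bt,\one)$ for an indeterminate $\lambda$, which yields
\[
G_w^+(\bx;z\one)\;=\;G_w^+(x_1+z+x_1z,\ldots,x_n+z+x_nz).
\]
All coefficients are nonnegative. So $\bx^{\ba_x}z^{|\ba_y|}$ must occur in $\prod_i(x_i+z+x_iz)^{p_i}$ for some $\mathbf p\in\Supp(G_w^+(\bx))$. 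Every monomial of that product has $x_i$-degree at most $p_i$ and total degree at least $|\mathbf p|$. Therefore $\ba_x\le\mathbf p$ and $|\mathbf p|\le|\ba|$.

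The degree inequality gives the minimum-degree refinement at once, because $(\mathbf r,\zero)\in\mathcal S_w$ for every $\mathbf r\in\Supp(\G_w(\bx))$. The vector $\mathbf q$ then follows from the symmetry $\G_w(\bx;\by)=\G_{w^{-1}}(\by;\bx)$.
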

\begin{proof}
Since $\deg(x_{i,j})=\ee_i\oplus\ee_j \in \NN^n \oplus \NN^n$, every shift $(\br,\bc) \in \NN^n \oplus \NN^n$ in the minimal multigraded free resolution of $R/I_w$ satisfies $|\br|=|\bc|$.
Thus, for an indeterminate $u$, we have
$$
 \K(R/I_w;\bt,u\one)\;=\;\K(R/I_w;u\bt,\one),
$$
since each term $\bt^{\br}u^{|\bc|}$ on the left equals $(u\bt)^{\br}$ on the right.
Take $\bt=\one+\bx$ and $u=1+z$.
Then \autoref{eq:fullK} and the definition of $G_w^+$ give
$$
\begin{aligned}
 G_w^+(\bx;z\one)
 &\;=\;(-1)^{\ell(w)}\K(R/I_w;\one+\bx,(1+z)\one)\\
 &\;=\;(-1)^{\ell(w)}\K(R/I_w;(1+z)(\one+\bx),\one)\\
 &\;=\;G_w^+(x_1+z+x_1z,\ldots,x_n+z+x_nz),
\end{aligned}
$$
where the last equality uses $(1+z)(1+x_i)=1+(x_i+z+x_iz)$.
By nonnegativity, $\bx^{\ba_x}z^{|\ba_y|}$ occurs in
$\prod_{i=1}^n(x_i+z+x_iz)^{p_i}$ for some
$\mathbf p\in\Supp(G_w^+(\bx))$.
Every monomial in this product has $x_i$-degree at most $p_i$ for each $i$
and total degree at least $|\mathbf p|$.
Hence $\ba_x\leq\mathbf p$ and $|\mathbf p|\leq|\ba_x|+|\ba_y|=|\ba|$.

Suppose now that $d:=|\ba|$ is the minimum total degree in $\mathcal S_w$.
Since $\G_w(\bx)=\G_w(\bx;\zero)$, every
$\br\in\Supp(\G_w(\bx))$ satisfies $(\br,\zero)\in\mathcal S_w$,
and therefore $|\br|\geq d$.
In particular, $d\leq|\mathbf p|\leq|\ba|=d$, so $|\mathbf p|=d$.
Thus $\mathbf p$ has minimum total degree in the ordinary support.

Finally, by \cite[Corollary~6.6]{LRS}, we have
$
 \G_w(\bx;\by)=\G_{w^{-1}}(\by;\bx).
$
Thus $(\ba_y,\ba_x)\in\mathcal S_{w^{-1}}$.
Applying the preceding argument to this vector gives
$\mathbf q\in\Supp(\G_{w^{-1}}(\by))$ with $\ba_y\leq\mathbf q$.
If $\ba$ has minimum total degree in $\mathcal S_w$, then
$(\ba_y,\ba_x)$ has minimum total degree in $\mathcal S_{w^{-1}}$,
so $\mathbf q$ also has minimum total degree in its ordinary support.
\end{proof}

\section{Proof of the main theorem}\label{sec:proof}

In this section, we prove \autoref{thm:main}.
Before proceeding with the proof we need to recall further results regarding Grothendieck polynomials.

Fix $w\in S_n$. 
We keep the notation from  \autoref{sec:standardization}. 
In particular, $\mathcal S_w = \Supp(\G_w(\bx;\by)) \subset \NN^{2n}$ is the support of the double Grothendieck polynomial $\G_w$.
Also, $B_w$ and  $T_w$ are the subsets of $\mathcal S_w$ of minimum total degree and maximum total degree, respectively.
For $v=w$ or $v= w^{-1}$, let $\mathcal A_v:=\Supp(\G_v(\bx)) \subset \NN^n$ be the support of the ordinary Grothendieck polynomial, and $M_v$ be its subset of maximum total degree.

We use the following two results, the first of which is proved geometrically in \autoref{sec:ordinary}.

\begin{theorem}[{Nguyen-Dang--Wang~\cite[Theorem 1.1]{NW}}]\label{prop:ordinary-exchange}
For every $w\in S_n$, the support of $\G_w(\bx)$ is an $M^\natural$-convex set.
\end{theorem}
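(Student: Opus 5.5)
The plan is to prove this geometrically, in the spirit of \autoref{sec:rigidity}. Let $\G_w^+(\bx):=(-1)^{\ell(w)}\G_w(-\bx)$, so that $\Supp(\G_w^+(\bx))=\Supp(\G_w(\bx))$. The strategy is to show that its homogenization
$$
\widehat{G}_w(\bx,u)\;:=\;\sum_{\ba}g^+_{\ba}\,\bx^{\ba}\,u^{D-|\ba|}, \qquad D:=\max\{|\ba|\},
$$
is \emph{denormalized Lorentzian}: applying the normalization operator $\bx^{\ba}\mapsto \bx^{\ba}/\ba!$ should produce a Lorentzian polynomial. By Br\"and\'en--Huh, the support of a Lorentzian polynomial is $M$-convex, and normalization does not change the support. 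So $\widehat S_D$ would be $M$-convex, which is exactly $M^\natural$-convexity of $\Supp(\G_w(\bx))$ by \autoref{def:discrete-exchange}.

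\textbf{Step 1 (geometric model).} Grade $R$ by rows only, with $\deg(x_{i,j})=\ee_i\in\NN^n$, and specialize $\bs=\one$ in the $K$-polynomial. Then \autoref{eq:fullK} with $\by=\zero$ gives $\K(R/I_w;\one-\bx)=\G_w(\bx)$. To avoid degree constraints I would first run the standardization of \autoref{sec:standardization} in this row-grading, or simply pad with extra variables in each row. This yields an integral $X_w\subset(\PP^m)^n$ with rational singularities, using \autoref{prop:standardization}(iii) together with the fact that $\MultiProj$ of a ring with rational singularities has rational singularities, since the multicone is locally a torus torsor. Exactly as in the proof of \autoref{prop:full-box}, we get $g^+_{\ba}=c_{\bm-\ba}(X_w)\ge 0$. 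Thus $\widehat G_w$ becomes, up to the reflection $\ba\mapsto\bm-\ba$, the generating polynomial of the signed Hilbert coefficients of $X_w$, with $u$ recording the codimension-type defect $d-|\bn|$.

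\textbf{Step 2 (reduction to quadratics).} A homogeneous polynomial of degree $D$ is Lorentzian if two things hold: its support is $M$-convex, and every $(D-2)$-fold partial derivative of its normalization is a quadratic form with at most one positive eigenvalue. For the support, I would use \autoref{thm:rigidity}(ii),(iii) together with \autoref{rem:intermediate-support}. This still needs an $M$-convexity input for the top and bottom slices. The bottom slice is the multidegree support of an irreducible variety, which is $M$-convex by Brion/Castillo et al. (CCLMZ). For the quadratic forms, the key observation from the proof of \autoref{thm:rigidity} is that $\partial/\partial x_i$ on the normalized polynomial corresponds to cutting by the generic hyperplane $H_i$. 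By the Bertini statement cited there, this preserves integrality and rational singularities, so derivatives in the $\bx$-directions stay inside the class of varieties under consideration.

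\textbf{Step 3 (main obstacle: the $u$-derivatives).} The hard step is to realize differentiation in the homogenizing variable $u$ geometrically. My first attempt is to replace $X_w$ by a cone/projective-bundle construction $\widetilde X\subset(\PP^m)^n\times\PP^{N}$, such as the join with an extra point or the projectivized cone over $X_w$ in the $\OO(1,\ldots,1)$ direction. The aim is that the signed coefficients $c_\bn(X_w)$ become honest multidegrees of $\widetilde X$, with the new factor carrying the exponent of $u$. This is plausible because the Kawamata--Viehweg argument in \autoref{thm:rigidity} identifies $c_\zero$ with $h^0(\omega_X\otimes\LL)$, i.e. a shift by $\OO(-1,\ldots,-1)$. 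Once everything is a multidegree polynomial of an irreducible variety, all derivatives reduce to two-dimensional cuts. For those, the Hodge index theorem (equivalently Khovanskii--Teissier) gives the one-positive-eigenvalue condition. If the cone construction fails to preserve the relevant coefficients, the fallback is to check the quadratic condition directly on a surface $Z$ obtained after all the cuts. There the $3\times3$ forms involve only $\chi(\OO_Z)$-type terms and $h^0(\omega_Z\otimes\LL)$, and one uses the rational singularities of $Z$ and Hodge index on a resolution.
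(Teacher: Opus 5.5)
Your Step 1 is sound: with the row grading, $X_w=\MultiProj(R/I_w)\subset(\PP^{n-1})^n$ is integral with rational singularities, and $g^+_{\ba}=c_{\bm-\ba}(X_w)$. The gap is in Steps 2 and 3, and it is not confined to the $u$-direction.

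\textbf{Hyperplane cuts are not derivatives here.} Write $G^+_X$ for the sign-corrected twisted $K$-polynomial of $X$. Because of the reflection $\ba\mapsto\bm-\ba$, generic hyperplane sections do not differentiate $\mathrm{N}(\widehat G_w)$; they multiply it: $G^+_{X\cap H_i}=x_i\,G^+_X$, which is just $[\OO_{X\cap H_i}]=[\OO_X]\cdot[\OO_{H_i}]$. Since $\partial_{x_i}\mathrm{N}(f)=\mathrm{N}\bigl((f-f|_{x_i=0})/x_i\bigr)$, a derivative is the inverse operation, and it leaves the class of structure sheaves of integral varieties. For a conic $C\subset\PP^2$ one has $G^+=2x+x^2$, and $\partial_x\mathrm{N}(G^+)=2+x$ is the class of $\pi_*\OO_C$ for a double cover $\pi\colon C\to\PP^1$. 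So Bertini gives no closure under derivatives.

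\textbf{Switching to the unreflected polynomial does not rescue the argument.} You could pass to $\sum_{\bn}c_{\bn}(X_w)\by^{\bn}u^{d-|\bn|}$, where cuts really are derivatives and whose support is a reflection of $\Supp(\G_w(\bx))$. But then you need this polynomial to be denormalized Lorentzian, which is a new statement about Hilbert coefficients of varieties with rational singularities, and nothing in your proposal proves it.

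\textbf{Neither Step 3 device works.} The cone/join construction cannot help. Multidegrees of joins, cones, or projective bundles of sums of $\OO(\ba)$'s over $X$ are determined by the multidegrees of $X$, whereas the $c_{\bn}(X)$ with $|\bn|<d$ are not. For example, smooth quartic curves in $\PP^3$ of genus $0$ and $1$ both have degree $4$, but $c_0=3+g$. The surface fallback fails for the same reason: for a smooth surface $c_{\zero}(Z)=\chi(\OO_Z)+\tfrac12 H\cdot(H+K_Z)$, and Hodge index on a resolution says nothing about $\chi(\OO_Z)$.

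\textbf{The support route lacks its top-degree inputs.} Going through \autoref{thm:rigidity} and \autoref{rem:intermediate-support} requires $M$-convexity of the maximal-degree slice, domination of every support point by that slice, and the rank inequality. \autoref{thm:rigidity} gives only the lower bound and the box property. In the paper, those top-degree facts are deduced from the very statement you are proving (together with \cite{PSW}), so using them here would be circular.

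\textbf{The missing idea.} You need a single irreducible model on which the whole homogenized polynomial, homogenizing variable included, is an intersection number of Chern classes of globally generated bundles. The paper builds this as follows.
\begin{enumerate}
\item It constructs the relative Bott--Samelson tower $\rho\colon Z_w\to(\PP^{n-1})^n$ from the reduced word $B_{n-1}\cdots B_1$ of $w_0w^{-1}$, and writes $F_w$ via the operators $D_i(f)=\partial_i((x_{i+1}+z)f)$.
\item Conjugating by $\iota$ gives $\Theta_i(f)=\partial_i(x_{i+1}(x_i+z)f)$, and each $\Theta$-step is realized as a $\PP^1$-bundle pushforward (\autoref{lem:ordinary-FP}). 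This yields $P_w(\bh,z)=\rho_*\bigl(c_N(\mathscr H_0)\,c_L(\mathscr B)\sum_k z^{L-k}c_k(\mathscr A)\bigr)$ with $\mathscr H_0,\mathscr A,\mathscr B$ globally generated.
\item The variable $z$ is the Chern variable of $\mathscr A$; this is how the $u$-direction you identified as the main obstacle is handled.
\item The reflection is undone by Poincar\'e duality on $(\PP^{n-1})^n$ through the bundles $\mathscr R_j$.
\item Lorentzianity then follows from the Chern-class criterion of \cite{CRSegre} together with $\mathrm{N}(F_w)=\partial_\tau^r\mathrm{N}(\Phi)$.
\end{enumerate}
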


\begin{theorem}[{Pechenik--Speyer--Weigandt~\cite[Theorem~1.5]{PSW}}]\label{thm:top-support}
For every $w \in S_n$, we have the equality $T_w = M_w\times M_{w^{-1}}$.
\end{theorem}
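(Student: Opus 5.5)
We prove the two inclusions $T_w\subseteq M_w\times M_{w^{-1}}$ and $M_w\times M_{w^{-1}}\subseteq T_w$ separately. The first comes from \autoref{lem:ordinary-domination} together with a degree count; the second needs a genuine factorization of the top-degree component.

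Write $D_x$ for the maximum total degree in $\mathcal A_w$ and $D_y$ for the maximum total degree in $\mathcal A_{w^{-1}}$.

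\emph{Step 1: upper bound on degrees.} Let $\ba=(\ba_x,\ba_y)\in\mathcal S_w$. By \autoref{lem:ordinary-domination} there are $\mathbf p\in\mathcal A_w$ and $\mathbf q\in\mathcal A_{w^{-1}}$ with $\ba_x\le\mathbf p$ and $\ba_y\le\mathbf q$. Hence $|\ba|\le D_x+D_y$. Moreover, equality forces $\ba_x=\mathbf p$ with $|\mathbf p|=D_x$ and $\ba_y=\mathbf q$ with $|\mathbf q|=D_y$, so $\ba\in M_w\times M_{w^{-1}}$.

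\emph{Step 2: the bound is attained.} Consider the identity $G_w^+(\bx;z\one)=G_w^+(x_1+z+x_1z,\ldots,x_n+z+x_nz)$ from the proof of \autoref{lem:ordinary-domination}. The coefficients of $G_w^+(\bx)$ are nonnegative, and so are those of each factor $x_i+z+x_iz$. Therefore the part of top total degree on the right-hand side comes exactly from the terms $\mathbf p\in M_w$. These contribute $\bx^{\mathbf p}z^{D_x}$ with no cancellation, so the right-hand side has total degree $2D_x$. Its top part is $z^{D_x}$ times the top part of $G_w^+(\bx)$.

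Combining this with Step 1 gives $2D_x\le D_x+D_y$. Applying the same argument to $w^{-1}$, via $\G_w(\bx;\by)=\G_{w^{-1}}(\by;\bx)$, gives $2D_y\le D_x+D_y$. Hence $D_x=D_y=:D$, the maximum total degree of $\mathcal S_w$ equals $2D$, and $T_w\subseteq M_w\times M_{w^{-1}}$. Step 2 also shows that every $\mathbf p\in M_w$ occurs as $\ba_x$ for some $\ba\in T_w$, and symmetrically for $M_{w^{-1}}$.

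\emph{Step 3: the product structure (main obstacle).} We must show that every pair $(\mathbf p,\mathbf q)\in M_w\times M_{w^{-1}}$ occurs, not merely that each factor occurs with some partner. The approach is to prove the stronger polynomial identity that the top-degree part of $G_w^+(\bx;\by)$ equals $\mathrm{top}(G_w^+(\bx))\cdot\mathrm{top}(G_{w^{-1}}^+(\by))$.

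Our first attempt would use the Cauchy-type expansion of double Grothendieck polynomials as a signed sum of products $\G_u(\bx)\G_{v}(\by)$, indexed by pairs whose Demazure product recovers $w$. The target is to show that only the pair $u=w$, $v=\mathrm{id}$ and its mirror attain the maximum degree $2D$. Since the coefficients are nonnegative after the positive twist, the top-degree product of that term cannot be cancelled.

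If this bookkeeping becomes too delicate, we would fall back on the geometry. The proposed route is to rewrite $\mathrm{top}(G_w^+)$ as the extremal Hilbert coefficients $c_{\bm-\ba}(Y_w)$ with $|\bm-\ba|$ minimal, using \autoref{eq:K-Hilbert-coefficients}. The hope is then to identify the lowest-degree Hilbert data of $Y_w$ with that of a product of the two projections, via $\HH^d(Y_w,\LL^{-1})$ as in the proof of \autoref{thm:rigidity}. Nonnegativity would then again rule out cancellation. We expect this factorization step, rather than the degree count, to be the real difficulty.
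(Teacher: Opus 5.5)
\paragraph{There is a genuine gap in Step 3.}
Your Steps 1 and 2 are correct and make good use of \autoref{lem:ordinary-domination} together with nonnegativity. They give $\deg\G_w(\bx)=\deg\G_{w^{-1}}(\by)=:D$ and show that the top total degree of $\mathcal S_w$ is $2D$. They also prove $T_w\subseteq M_w\times M_{w^{-1}}$, and show that $T_w$ projects onto $M_w$ and onto $M_{w^{-1}}$.

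The gap is the reverse inclusion $M_w\times M_{w^{-1}}\subseteq T_w$, which is the real content of the statement, and you do not prove it. The paper does not prove it either; it quotes the top-degree factorization \cite[Theorem~1.5]{PSW}. It cannot be extracted from Steps 1--2. All they tell you about the top component $F$ of $G_w^+(\bx;\by)$ are its two marginals, $F(\bx;z\one)=z^D\,\mathrm{top}(G_w^+(\bx))$ and $F(z\one;\by)=z^D\,\mathrm{top}(G_{w^{-1}}^+(\by))$. Marginals do not determine support: compare $x_1y_1+x_2y_2$ with $(x_1+x_2)(y_1+y_2)$. In pipe-dream terms, $T_w$ is the set of pairs (row-count vector, column-count vector) of maximal-size pipe dreams with Demazure product $w$. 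What is missing is exactly that these two vectors can be prescribed independently, and that needs a genuinely new input.

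\paragraph{Your two sketched routes do not supply it.}
First, the identity you aim for is false as stated. Substitute $\by=z\one$ into $\mathrm{top}(G^+_w(\bx;\by))=\mathrm{top}(G^+_w(\bx))\,\mathrm{top}(G^+_{w^{-1}}(\by))$ and compare with your Step 2. This forces $\mathrm{top}(G^+_{w^{-1}})(\one)=1$. That fails for $w=1342$, where $\G_{1342}(\bx)=e_2(x_1,x_2,x_3)-2x_1x_2x_3$ and $\G_{1423}(\by)=h_2(y_1,y_2)-y_1^2y_2-y_1y_2^2$. There, Steps 1--2 alone force the top component of $G^+_{1342}(\bx;\by)$ to be $x_1x_2x_3(y_1^2y_2+y_1y_2^2)$, which is half the product. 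So you can at best hope for a factorization up to a positive scalar; that would suffice for supports.

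Second, in the Cauchy-formula plan the pair $(u,v)=(w,\mathrm{id})$ contributes only $G^+_w(\bx)$. Its total degree is $D<2D$ (for $w\neq\mathrm{id}$), so neither it nor its mirror ever reaches the top component. The top component is the sum of $\mathrm{top}(G^+_u)(\bx)\,\mathrm{top}(G^+_v)(\by)$ over all pairs maximizing $\deg\G_u+\deg\G_v$. Showing that this sum has product support requires identifying those pairs and relating their top components to those of $w$ and $w^{-1}$, and that is the hard part.

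Third, the geometric route is not yet an argument. \autoref{thm:rigidity}(iii) only propagates nonvanishing of $c_\bn(Y_w)$ upward inside $\dmsupp_\PP(Y_w)$. Via \autoref{prop:full-box}, that means only downward in $\mathcal S_w$ from points already known to lie in it, so it cannot produce a new top-degree point $(\mathbf p,\mathbf q)$.

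As it stands, you must either cite \cite{PSW} for Step 3, as the paper does, or give an actual proof of the product property.
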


Finally, we are ready to prove our main result.

\begin{proof}[Proof of \autoref{thm:main}]
By \autoref{prop:ordinary-exchange}, each $\mathcal A_v$ is $M^\natural$-convex.
As a consequence, $M_v$ is $M$-convex and every vector in $\mathcal A_v$ is bounded above by a vector in $M_v$ (see \autoref{sec:poly}).
By \autoref{thm:top-support}, $T_w=M_w\times M_{w^{-1}}$, and so $T_w$ is $M$-convex.
Also, $B_w$ is the support of a double Schubert polynomial, and so it is
$M$-convex by \cite[Theorem~A]{CCMM}.

We first describe $\mathcal S_w$ in terms of $B_w$ and $T_w$.
Let $\ba=(\ba_x,\ba_y)\in\mathcal S_w$.
By \autoref{lem:ordinary-domination}, there are $\mathbf p\in\mathcal A_w$ and $\mathbf q\in\mathcal A_{w^{-1}}$ with $\ba_x\leq\mathbf p$ and $\ba_y\leq\mathbf q$.
Choosing vectors in $M_w$ and $M_{w^{-1}}$ above $\mathbf p$ and $\mathbf q$ gives $\ba\leq\bt$ for some $\bt\in T_w$.
By \autoref{prop:full-box}(i), there is also $\bb\in B_w$ with $\bb\leq\ba$.
Conversely, if $\bb\leq\ba\leq\bt$ for $\bb\in B_w$ and $\bt\in T_w$, then \autoref{prop:full-box}(ii) gives $\ba\in\mathcal S_w$.
Therefore, we obtain the equality
$$
 \mathcal{S}_w \;=\; \big\{\ba\in\NN^{2n}\mid\bb\leq\ba\leq\bt
       \text{ for some }\bb\in B_w,\ \bt\in T_w\big\}.
$$

Let
$E_x:=[n]$ and $E_y:=[2n]\setminus[n]=\{n+1,\ldots,2n\}$ be the two coordinate blocks.
By \autoref{rem:intermediate-support}, it suffices to show that the function $\rk_{T_w}-\rk_{B_w} : 2^{[2n]} \rightarrow \ZZ$ is nondecreasing under inclusion.

Let $L_v$ be the subset of $\mathcal A_v$ of minimum degree $\ell(v)$.
By \autoref{lem:ordinary-domination}, every $\bb = (\bb_x, \bb_y) \in B_w$ satisfies $\bb_x\leq\mathbf p$ for some $\mathbf p\in L_w$.
By specialization, we obtain that $(\mathbf p,\zero)\in B_w$ for every $\mathbf p\in L_w$.
Together with the product formula for $T_w$, this gives
$$
 \rk_{B_w}(I) \;=\; \rk_{L_w}(I)\quad \text{and} \quad
 \rk_{T_w}(I) \;=\; \rk_{M_w}(I) \quad \text{ for all \;}
 I\subseteq E_x.
$$
By symmetry, the same argument yields
$$
\rk_{B_w}(I) \;=\; \rk_{L_{w^{-1}}}(I)\quad \text{and} \quad
\rk_{T_w}(I) \;=\; \rk_{M_{w^{-1}}}(I) \quad \text{ for all \;}
I\subseteq E_y.
$$
Therefore, by combining \autoref{prop:ordinary-exchange} and \autoref{rem:intermediate-support}, we obtain that the function  $\rk_{T_w}-\rk_{B_w} : 2^{[2n]} \rightarrow \ZZ$ is nondecreasing under inclusion when restricted to either $2^{E_x}$ or $2^{E_y}$.

Now let $I\subseteq[2n]$ and choose $i\in[2n]\setminus I$. 
Let $E\in\{E_x,E_y\}$ be the block containing $i$.
Let $J:=I\cap E$ and $K := I \setminus E$.
Submodularity of $\rk_{B_w}$, the preceding observation, and the product description of $T_w$ give
$$
\begin{aligned}
 \rk_{B_w}(I\cup\{i\})-\rk_{B_w}(I)
 &\;\leq\;\rk_{B_w}(J\cup\{i\})-\rk_{B_w}(J)\\
 &\;\leq\;\rk_{T_w}(J\cup\{i\})-\rk_{T_w}(J)\\
 &\;=\; \Big(\rk_{T_w}(J\cup\{i\}) + \rk_{T_w}(K) \Big)- \Big(\rk_{T_w}(J) + \rk_{T_w}(K)\Big)\\
 &\;=\;\rk_{T_w}(I\cup\{i\})-\rk_{T_w}(I).
\end{aligned}
$$
Finally, we have that $\rk_{T_w}-\rk_{B_w}$ is nondecreasing under inclusion, and
\autoref{rem:intermediate-support} completes the proof of the theorem. 
\end{proof}

\appendix

\section{Ordinary Grothendieck Lorentzianity}\label{sec:ordinary}
This appendix grew out of the author's efforts to understand the recent result of Nguyen-Dang and Wang~\cite{NW} showing that the positive homogenizations of ordinary Grothendieck polynomials are denormalized Lorentzian.
Their result settled positively a conjecture of Huh, Matherne, M\'esz\'aros, and St.~Dizier~ \cite{HMMSD}.
Following their Bott--Samelson and bundle constructions, we give a simplified proof of their Lorentzianity result for ordinary Grothendieck polynomials.
Our aim is to make the argument as self-contained and direct as possible.
Since our proof of \autoref{thm:main}  relies on \autoref{prop:ordinary-exchange}, we include this appendix so that the paper contains a complete argument for it.
We also hope that this presentation will be useful to other researchers in algebraic combinatorics.

Given a variety $Y$ and a vector bundle $\EE$  on $Y$, we use the convention that 
$$
\PP_Y(\mathscr{E}) \;:=\; \fProj_Y\left(\Sym\left(\EE^\vee\right)\right)
$$ parametrizes lines in
$\mathscr{E}$, and that $c_z(\EE) := \sum_{i \ge 0} c_i(\EE)z^i$ is the Chern polynomial of $\EE$.

We now recall the construction of relative Bott--Samelson varieties following 
Fulton--Pragacz~\cite[Appendix~C]{FP} (see also
\cite{Magyar,BrionFlags,Hudson,Li,AF,KW}).
Fix a permutation $w\in S_n$ on $[n]=\{1,\ldots,n\}$. 
Let
$$
 N\;:=\;\binom n2,\qquad
 u \;:=\; w_0w^{-1}\qquad \text{and} \qquad L\;:=\;\ell(u)=N-\ell(w).
$$
For $1\leq a<n$, define
$$
 c_a \;:=\; \#\Big\{b>a\mid u^{-1}(b)<u^{-1}(a)\Big\}\qquad \text{and} \qquad
 B_a \;:=\; s_as_{a+1}\cdots s_{a+c_a-1},
$$
where $B_a$ is the identity if $c_a=0$.
 Let 
 $$
 \mathbf{i} \;:=\; \left(i_1,\ldots,i_L\right) \;:=\; \big(
 \ldots, \, \underbrace{a, a+1,\ldots,a+c_a-1}_{c_a \text{ indices}},\, \ldots,\, \underbrace{1, 2,\ldots,c_1}_{c_1 \text{ indices}}
 \big)
 $$ 
 be the sequence of indices in the displayed product $B_{n-1}B_{n-2}\cdots B_1$, read from left to right.
Therefore
$$
 B_{n-1}B_{n-2}\cdots B_1 \;=\; s_{i_1}s_{i_2}\cdots s_{i_L} \;=\; u.
$$
Moreover, $\sum_ac_a=\ell(u)$, and so $\mathbf{i}=(i_1,\ldots,i_L)$ is a reduced word of $u$.

Let $\PP:=(\PP_\kk^{n-1})^n$, $\EE:=\bigoplus_{a=1}^n\OO_{\PP}(\ee_a)$ and $h_a=c_1(\OO_{\PP}(\ee_a)) \in A^1(\PP)$.
Starting with $Z_0:=\PP$, take the full quotient flag
$$
\EE_\bullet^{(0)} \;:\; \EE=\mathscr{E}_n^{(0)} \;\twoheadrightarrow\;
 \mathscr{E}_{n-1}^{(0)} \;\twoheadrightarrow \; \cdots \; \twoheadrightarrow \;
 \mathscr{E}_1^{(0)} \; \twoheadrightarrow \; \mathscr{E}_0^{(0)}=0
 $$
with
$
 \mathscr{E}_j^{(0)} := \bigoplus_{a=1}^j\OO_{\PP}(\ee_a).
$
Suppose we have constructed $Z_{t-1}$ and a full quotient flag $\EE_\bullet^{(t-1)}$ on $Z_{t-1}$.
Let
$$
 \mathscr{K}_t \;:=\; \Ker\bigl(
 \mathscr{E}_{i_t+1}^{(t-1)}
 \twoheadrightarrow\mathscr{E}_{i_t-1}^{(t-1)}\bigr)
 \qquad \text{and} \qquad
\rho_t \;: \; Z_t:=\PP_{Z_{t-1}}(\mathscr{K}_t) \;\longrightarrow\; Z_{t-1}.
$$
We have the universal short exact sequence
\begin{equation}
	\label{eq_univ_ex_seq}
	 0 \;\longrightarrow\; \OO_{Z_t}(-1) \;\longrightarrow\;
	\rho_t^*(\mathscr{K}_t) \;\longrightarrow\; \mathscr{Q}_t \;\longrightarrow\; 0
\end{equation}
with $\mathscr{Q}_t:=\rho_t^*(\mathscr{K}_t)/\OO_{Z_t}(-1)$.
We define the full quotient flag $\EE_\bullet^{(t)}$ on $Z_t$ given by 
\begin{equation}
	\label{eq_def_EE}
	\EE_j^{(t)} \;:=\;  
	\begin{cases}
		\rho_t^*\big(\EE_j^{(t-1)}\big) & \quad \text{if \;} j \neq i_t \\
		\rho_t^*\big(\mathscr{E}_{i_t+1}^{(t-1)}\big)/\OO_{Z_t}(-1) & \quad \text{if \;} j = i_t.
	\end{cases}
\end{equation}
We have the two short exact sequences 
\begin{equation}\label{eq:ordinary-step-sequences_q_t}
 0 \;\longrightarrow\; \OO_{Z_t}(-1)\longrightarrow
 \EE_{i_t+1}^{(t)}=\rho_t^*(\mathscr{E}_{i_t+1}^{(t-1)}) \;\longrightarrow\;
 \mathscr{E}_{i_t}^{(t)} \;\longrightarrow \;0
 \end{equation}
 and 
 \begin{equation}\label{eq:ordinary-step-sequences_p_t}
 0\;\longrightarrow\;\mathscr{Q}_t \;\longrightarrow\;
 \mathscr{E}_{i_t}^{(t)} \; \longrightarrow \;
 \rho_t^*(\mathscr{E}_{i_t-1}^{(t-1)})= \EE_{i_t-1}^{(t)} \;\longrightarrow \; 0.
\end{equation}
Finally, consider the composition
$$
 \rho \;:=\; \rho_1\circ\rho_2\circ\cdots\circ\rho_L \;:\; Z_w:=Z_L \;\longrightarrow\; \PP.
$$
Since each $\rho_t$ is a $\PP^1$-bundle, we have that $Z_w$ is smooth, integral, and projective, with $\dim(Z_w)=\dim(\PP)+L=2N+L$.
Every $\mathscr{E}_j^{(t)}$ is a quotient of the pullback of
$\mathscr{E}$, so all these bundles are globally generated.

We consider the Chern roots
\begin{equation}
	\label{eq_def_roots}
	 r_k^{(t)} \;:=\; c_1\!\left(\Ker\bigl(
	\mathscr{E}_k^{(t)}\twoheadrightarrow\mathscr{E}_{k-1}^{(t)}\bigr)\right) \;\in\; A^1(Z_t),
	 \qquad
	\mathbf{r}^{(t)} \;:=\; (r_1^{(t)},\ldots,r_n^{(t)}).
\end{equation}
At step $t$, these Chern classes satisfy
\begin{equation}
	\label{eq_compatible_roots}
	 r_k^{(t)} \;=\; \rho_t^*(r_k^{(t-1)})
	\quad\text{for }k \;\notin\; \{i_t,i_t+1\}.
\end{equation}
To simplify notation, we set 
$$
p_t \;:=\; r_{i_t}^{(t)} \;=\; c_1(\mathscr{Q}_t) \qquad \text{ and }
\qquad q_t \;:=\; r_{i_t+1}^{(t)} \;=\; c_1(\OO_{Z_t}(-1)).
$$
We shall also suppress the pullbacks of Chern classes (for instance, we may write $r_k^{(t-1)} \in A^1(Z_t)$, instead of $\rho_t^*(r_k^{(t-1)}) \in A^1(Z_t)$).

The following classical lemma encodes the operators $\partial_j$ in terms of pushforwards along $\rho_t$.
For completeness, we show a direct short argument in our setting.

\begin{lemma}[{\cite[Appendix~C, Proposition~(1)]{FP}}]\label{lem:ordinary-FP}
Fix $1\leq t\leq L$, and let
$
 f\in \left(A^*(Z_{t-1})[z]\right)\left[x_{i_t},x_{i_t+1}\right],
$
where $\partial_{i_t}$ acts on the variables $x_{i_t}$ and $x_{i_t+1}$. 
Then
$$
 (\rho_t)_*\left(f(p_t,q_t)\right)
 \;=\; \partial_{i_t}(f)\bigl(
 r_{i_t}^{(t-1)},r_{i_t+1}^{(t-1)}\bigr).
$$
\end{lemma}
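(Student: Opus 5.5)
The plan is to reduce to a single $\PP^1$-bundle computation, since the claim is local to the step $\rho_t : Z_t = \PP_{Z_{t-1}}(\mathscr{K}_t) \to Z_{t-1}$. Write $a := r_{i_t}^{(t-1)}$ and $b := r_{i_t+1}^{(t-1)}$. These are the Chern roots of the rank-two bundle $\mathscr{K}_t$, because
$$
\mathscr{K}_t \;=\; \Ker\bigl(\mathscr{E}_{i_t+1}^{(t-1)}\twoheadrightarrow\mathscr{E}_{i_t-1}^{(t-1)}\bigr)
$$
is filtered by the two successive kernels whose first Chern classes are $a$ and $b$. By \eqref{eq_univ_ex_seq}, $p_t + q_t = c_1(\mathscr{Q}_t) + c_1(\OO_{Z_t}(-1)) = a + b$. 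The Whitney formula gives $p_t q_t = ab$, so $q_t$ satisfies $(q_t - a)(q_t - b) = 0$. Both sides of the claimed identity are $A^*(Z_{t-1})[z]$-linear in $f$, by the projection formula for $(\rho_t)_*$ on the left and $A^*(Z_{t-1})$-linearity of $\partial_{i_t}$ on the right. Using $p_t = a + b - q_t$, I would therefore reduce to checking the identity on a module basis.

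Next I would use the projective bundle theorem. With $\zeta := c_1(\OO_{Z_t}(1)) = -q_t$, the ring $A^*(Z_t)$ is a free $A^*(Z_{t-1})$-module with basis $1, \zeta$, subject to $\zeta^2 + c_1(\mathscr{K}_t)\zeta + c_2(\mathscr{K}_t) = 0$. The standard pushforward gives $(\rho_t)_*(1) = 0$ and $(\rho_t)_*(\zeta) = 1$, since $\rho_t$ has relative dimension one and $\zeta$ restricts to the point class on each fiber. Hence $(\rho_t)_*(1) = 0$ and $(\rho_t)_*(q_t) = -1$. I will double check this sign against the paper's convention that $\PP(\EE)$ parametrizes lines, where $\OO(-1)$ is the tautological subbundle.

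On the algebraic side, $\partial_{i_t}(1) = 0$ and
$$
\partial_{i_t}(x_{i_t+1}) \;=\; \frac{x_{i_t+1} - x_{i_t}}{x_{i_t} - x_{i_t+1}} \;=\; -1.
$$
Both sides thus agree on the basis $1, x_{i_t+1}$. The remaining point is that the right-hand side is well defined on the quotient: the map $f \mapsto \partial_{i_t}(f)(a,b)$ must kill the relations $x_{i_t} + x_{i_t+1} - (a+b)$ and $x_{i_t}x_{i_t+1} - ab$ times anything. This follows because $\partial_{i_t}$ is linear over symmetric functions in $x_{i_t}, x_{i_t+1}$. Write a general $f$ as $g + h\,x_{i_t+1}$ with $g, h$ symmetric in $x_{i_t}, x_{i_t+1}$ (coefficients in $A^*(Z_{t-1})[z]$ and the other variables). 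Then $\partial_{i_t}(f) = -h$. Evaluating, $f(p_t, q_t) = g(a,b) + h(a,b)\,q_t$, because symmetric functions of $(p_t, q_t)$ equal those of $(a, b)$. Pushing forward gives $-h(a,b)$, and the two sides match.

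I expect the main obstacle to be the sign and convention bookkeeping in the first two steps. This means identifying which of $p_t, q_t$ is the class of the sub-line-bundle, and checking that $(\rho_t)_*(q_t) = -1$ rather than $+1$, so that it matches the orientation of $\partial_i$ with denominator $x_i - x_{i+1}$. The decomposition into $g + h\,x_{i_t+1}$ is standard, since any polynomial decomposes over the symmetric polynomials with basis $1, x_{i+1}$.
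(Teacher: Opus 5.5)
Your proof is correct and follows essentially the same route as the paper: the Chern roots $a,b$ of $\mathscr{K}_t$, the relation $(q_t-a)(q_t-b)=0$, the projective bundle theorem with $(\rho_t)_*(1)=0$ and $(\rho_t)_*(q_t)=-1$, and the projection formula. Your sign is right and matches the paper's $(\rho_t)_*(-q_t)=1$. The only difference is cosmetic: you get the normal form $f(p_t,q_t)=g(a,b)-\partial_{i_t}(f)(a,b)\,q_t$ from the basis $\{1,x_{i_t+1}\}$ over symmetric polynomials, whereas the paper gets the equivalent identity $f(p_t,q_t)=f(a,b)+(p_t-a)\,\partial_{i_t}(f)(a,b)$ by a divisibility argument with an auxiliary polynomial $H(T)$.
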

\begin{proof}
By construction, we have that 
$
 c_1(\mathscr{K}_t)=r_{i_t}^{(t-1)}+r_{i_t+1}^{(t-1)}
 $
 and 
 $
 c_2(\mathscr{K}_t)=r_{i_t}^{(t-1)}r_{i_t+1}^{(t-1)}.
$
On the other hand, the universal sequence \autoref{eq_univ_ex_seq} yields $c_1(\mathscr{K}_t) = p_t + q_t$.
Then
 the projective bundle formula for Chow rings
\cite[Theorem~3.3, Example~8.3.4]{FultonIT}  gives
$$
 A^*(Z_t) \;\cong\;
 \frac{A^*(Z_{t-1})[q_t]}
 {\bigl(q_t^2-c_1(\mathscr{K}_t)q_t+c_2(\mathscr{K}_t)\bigr)}
 \;=\; \frac{A^*(Z_{t-1})[q_t]}
 {\bigl((q_t-r_{i_t}^{(t-1)})(q_t-r_{i_t+1}^{(t-1)})\bigr)}.
$$
In particular, $A^*(Z_t)$ is free over $A^*(Z_{t-1})$ with basis $1,q_t$.
Then \cite[Proposition~3.1(a)]{FultonIT} yields
\begin{equation}
	\label{eb_push_PP1_bundle}
	 (\rho_t)_*(1)=0,\qquad
	(\rho_t)_*\left(c_1(\mathscr{K}_t)\right)
	=0,
	\qquad
	(\rho_t)_*(-q_t)=1, \qquad (\rho_t)_*(p_t)=1.
\end{equation}
Consider the polynomial 
$$
H(T) \;:=\; f(T,x_{i_t}+x_{i_t+1}-T)
- f(x_{i_t},x_{i_t+1}) - (T-x_{i_t})\partial_{i_t}(f) \;\in\; \left(A^*(Z_{t-1})[z,x_{i_t}, x_{i_t+1}]\right)[T],
$$
where $T$ is a new indeterminate. 
Since $H(x_{i_t}) = H(x_{i_t+1})=0$, it follows that $H(T)$ is divisible by $(T-x_{i_t})(T-x_{i_t+1})$.
By specializing $T=p_t$, $x_{i_t} = r_{i_t}^{(t-1)}$ and $x_{i_t+1} = r_{i_t+1}^{(t-1)}$, we obtain the equality
$$
 f(p_t,q_t)
\;=\; f\bigl(r_{i_t}^{(t-1)},r_{i_t+1}^{(t-1)}\bigr)
+(p_t-r_{i_t}^{(t-1)})
\partial_{i_t}(f)\bigl(
r_{i_t}^{(t-1)},r_{i_t+1}^{(t-1)}\bigr) \;\in\; A^*(Z_t)[z],
$$
because $q_t = r_{i_t}^{(t-1)}+r_{i_t+1}^{(t-1)}-p_t$ and $(p_t-r_{i_t}^{(t-1)})(p_t-r_{i_t+1}^{(t-1)})=(q_t-r_{i_t}^{(t-1)})(q_t-r_{i_t+1}^{(t-1)})=0$ in $A^*(Z_t)$.
Finally, \autoref{eb_push_PP1_bundle} and the projection formula give
$$
\begin{aligned}
 (\rho_t)_*\left(f(p_t,q_t)\right)
 ={}&f\bigl(r_{i_t}^{(t-1)},r_{i_t+1}^{(t-1)}\bigr)(\rho_t)_*(1) +\partial_{i_t}(f)\bigl(
 r_{i_t}^{(t-1)},r_{i_t+1}^{(t-1)}\bigr)
 \left((\rho_t)_*(p_t)
 -r_{i_t}^{(t-1)}(\rho_t)_*(1)\right)\\
 ={}&\partial_{i_t}(f)\bigl(
 r_{i_t}^{(t-1)},r_{i_t+1}^{(t-1)}\bigr).
\end{aligned}
$$
This concludes the proof of the lemma.
\end{proof}

The \emph{normalization operator} $\mathrm{N}$ on $\mathbb{R}[t_1,\ldots,t_m]$ is the linear operator determined by
$$
 \mathrm{N}\left(t_1^{a_1}\cdots t_m^{a_m}\right)
 \;:=\;\frac{t_1^{a_1}\cdots t_m^{a_m}}{a_1!\cdots a_m!}
 \qquad\text{for \;\; }\ba=(a_1,\ldots,a_m)\in\NN^m.
$$
A homogeneous polynomial $f$ is \emph{denormalized Lorentzian} if $\mathrm{N}(f)$ is \emph{Lorentzian} in the sense of Br\"and\'en and Huh \cite{BH}.

\Needspace{6\baselineskip}
We are now ready for the main result of this appendix. 
Throughout the theorem below and its proof, we keep the same notation as above. 

\begin{theorem}[{Nguyen-Dang--Wang~\cite{NW}}]\label{prop:ordinary-support}
Let $w\in S_n$, and let
$G_w^+(\bx)=(-1)^{\ell(w)}\G_w(-\bx;\zero)$ be the sign-corrected ordinary Grothendieck polynomial. 
Then the homogenization
$$
 F_w(\bx,z) \;:=\; z^N G_w^+\left(\frac{x_1}{z},\ldots,\frac{x_n}{z}\right)
$$
is a denormalized Lorentzian polynomial.
In particular, $\Supp(F_w)$ is $M$-convex, and $\Supp(G_w^+)$ is $M^\natural$-convex.
\end{theorem}
\begin{proof}
For $1\leq i<n$, define operators on
$\ZZ[\bx,z]$ given by
$$
 D_i(f) \;:=\; \partial_i\bigl((x_{i+1}+z)f\bigr)
 \qquad \text{ and } \qquad
 \Theta_i(f):=\partial_i\bigl(x_{i+1}(x_i+z)f\bigr),
$$
where $s_i$ fixes $z$. 
Let
$\delta:=(n-1,n-2,\ldots,0)$. 
Since
$\mathbf i=(i_1,\ldots,i_L)$ is reduced for $u=w_0w^{-1}$, we obtain
$$
 w^{-1}w_0 \;=\; w_0uw_0
 \;=\; s_{n-i_1}\cdots s_{n-i_L}.
$$
Therefore sign-correction and homogenization of the recursion in
\autoref{def:grothendieck} give
\begin{equation}\label{eq:ordinary-D-recursion}
 F_w \;=\; \bigl(D_{n-i_1}\cdots D_{n-i_L}\bigr)(\bx^\delta).
\end{equation}
In particular, $F_w$ is a homogeneous polynomial of degree $N$.
We extend these operators to Laurent polynomials and define the involution
$$
 \iota(f)(\bx,z) \;:=\; (x_1\cdots x_n)^{n-1}
 f(x_n^{-1},\ldots,x_1^{-1},z^{-1}).
$$
We have that $\iota(\bx^\delta)=\bx^\delta$.
In \autoref{rem:ordinary-conjugation}, we verify the following identity
$$
 z(\iota D_{n-i}\iota)(f)
 \;=\;\frac{x_{i+1}(x_i+z)f-x_i(x_{i+1}+z)s_i(f)}{x_i-x_{i+1}}
 \;=\; \Theta_i(f).
$$
Then applying these identities successively to
\autoref{eq:ordinary-D-recursion} yields
\begin{equation}\label{eq:ordinary-reciprocity}
\begin{aligned}
 P_w(\bx,z)
 &\;:=\;\bigl(\Theta_{i_1}\cdots\Theta_{i_L}\bigr)(\bx^\delta) \\
  &\;=\;z^L\iota(F_w)(\bx,z)\\
 &\;=\; (x_1\cdots x_n)^{n-1} z^{-\ell(w)}
 G_w^+(z/x_n,\ldots,z/x_1).
\end{aligned}
\end{equation}
Consider the vector bundle $\mathscr{H}=\bigoplus_{j=1}^{n-1}\mathscr{E}_j^{(L)}$ on $Z_w$.
Since $\mathscr{E}_j^{(L)}$ has $r_1^{(L)},\ldots,r_j^{(L)}$ as Chern roots, we obtain
$$
 c_N(\mathscr{H})
 \;=\; \prod_{j=1}^{n-1}c_j(\mathscr{E}_j^{(L)})
 \;=\;\prod_{k=1}^n(r_k^{(L)})^{n-k}
 \;=\;\bx^\delta\big|_{\bx=\mathbf{r}^{(L)}}.
$$
Since
$\mathbf{r}^{(0)}=(h_1,\ldots,h_n)$, by successively applying \autoref{lem:ordinary-FP} and the projection formula, we obtain
\begin{equation}\label{eq:ordinary-pushforward}
 \rho_*\left(c_N(\mathscr{H})
       \prod_{t=1}^L q_t(p_t+z)\right)
 \;=\; P_w(\bh,z).
\end{equation}

We now express the class being pushed forward in terms of globally generated bundles. 
For the remainder of the proof, every bundle on an intermediate stage $Z_t$ is understood to be pulled back to $Z_w$, and we keep the same notation for its pullback. 

Let $d_a:=\sum_{b=a}^{n-1}c_b$ for $1\leq a\leq n$; in particular, $d_1=L$ and $d_n=0$.
For rows with $c_a=0$, we set $\mathscr{A}_a:=0$ and $\mathscr{B}_a:=0$.

We analyze each nonempty word $B_a = s_as_{a+1}\cdots s_{a+c_a-1}$.
For $1\leq j\leq c_a$, \autoref{eq:ordinary-step-sequences_p_t} gives
$$
 0 \;\longrightarrow\; \mathscr{Q}_{d_{a+1}+j}
 \;\longrightarrow\; \EE_{a+j-1}^{(d_{a+1}+j)}
 \;\longrightarrow\; \EE_{a+j-2}^{(d_{a+1}+j-1)}
 \;\longrightarrow\; 0.
$$
By definition \autoref{eq_def_EE}, we have that $\EE_{a-1}^{(d_a-c_a)} = \EE_{a-1}^{(0)}$.
It follows that the classes
$$
p_{d_{a+1}+j} \; = \; r_{a+j-1}^{(d_{a+1}+j)},
\qquad 1\leq j\leq c_a,
$$ 
are the Chern roots of the vector bundle $\mathscr{A}_a := \Ker\left(\EE_{a+c_a-1}^{(d_a)} \twoheadrightarrow \EE_{a-1}^{(0)}\right)$.
Since the initial flag $\EE_\bullet^{(0)}$ splits at each step, we obtain a natural surjection $\bigoplus_{b=a}^n \OO_\PP(\ee_b)\cong \EE/\EE_{a-1}^{(0)} \twoheadrightarrow \mathscr{A}_a$, and so $\mathscr{A}_a$ is globally generated.
Thus we obtain the globally generated vector bundle 
\begin{equation}\label{eq:ordinary-row_p_t}
	\mathscr{A} \;:=\; \bigoplus_{a=1}^{n-1} \mathscr{A}_a \qquad \text{ such that \quad  $\rank(\mathscr{A}) \;=\; L$ \quad and \quad }   c(\mathscr{A})
	\;=\; \prod_{t=1}^L(1+p_t).
\end{equation}

Similarly, from the natural surjections $\EE_{a+c_a}^{(d_{a+1})} \twoheadrightarrow \EE_{a+c_a-1}^{(d_{a+1})} \twoheadrightarrow \cdots \twoheadrightarrow \EE_{a}^{(d_{a+1})} = \EE_{a}^{(0)}$, we obtain the globally generated vector bundle 
$$
\mathscr{B}_a
\;:=\;\Ker\left(
\mathscr{E}_{a+c_a}^{(d_{a+1})}
\twoheadrightarrow\mathscr{E}_a^{(0)}\right)
$$
with Chern roots $r_{a+j}^{(d_{a+1})}$ for $1\leq j\leq c_a$.
For $t=d_{a+1}+j$ with $1\leq j\leq c_a$, we have $i_t=a+j-1$, and so by successively applying \autoref{eq_compatible_roots}, we get
$$
r_{i_t+1}^{(t-1)} \;=\; r_{i_t+1}^{(t-2)} \;=\; \cdots \;=\; r_{i_t+1}^{(d_{a+1}+1)} \;=\; r_{a+j}^{(d_{a+1})}.
$$
Thus we obtain the globally generated vector bundle
\begin{equation}\label{eq:ordinary-row_q_t}
 \mathscr{B}\;:=\;\bigoplus_{a=1}^{n-1}\mathscr{B}_a,
 \qquad \text{ such that } \qquad \rank(\mathscr{B})=L,
 \qquad
 c(\mathscr{B})\;=\;\prod_{t=1}^L
 \left(1+r_{i_t+1}^{(t-1)}\right).
\end{equation}
To relate these roots to the factors $q_t$ in
\autoref{eq:ordinary-pushforward}, for each $0 \le t \le L$, we set
$$
 \mathscr{H}_t \; := \; \bigoplus_{j=1}^{n-1}\mathscr{E}_j^{(t)}.
$$
In particular, $\mathscr{H}_L=\mathscr{H}$.
At step $t$, \autoref{eq:ordinary-step-sequences_q_t} and \autoref{eq_def_roots} give the equalities
$$
 c\bigl(\mathscr{E}_{i_t}^{(t)}\bigr)(1+q_t)
 \;=\;c\bigl(\mathscr{E}_{i_t+1}^{(t-1)}\bigr)
 \;=\;c\bigl(\mathscr{E}_{i_t}^{(t-1)}\bigr)
 \left(1+r_{i_t+1}^{(t-1)}\right).
$$
By multiplying all these equalities for $t = 1,\ldots, L$ and applying \autoref{eq:ordinary-row_q_t}, we obtain
$$
 c(\mathscr{H})\prod_{t=1}^L(1+q_t)
 \;=\;c(\mathscr{H}_0)c(\mathscr{B}).
$$
Since $\rank(\mathscr{H})=\rank(\mathscr{H}_0)=N$ and
$\rank(\mathscr{B})=L$, it follows that
\begin{equation}\label{eq:ordinary-top-Chern}
 c_N(\mathscr{H})\prod_{t=1}^L q_t
 \;=\;c_N(\mathscr{H}_0)c_L(\mathscr{B}).
\end{equation}
Combining \autoref{eq:ordinary-pushforward},
\autoref{eq:ordinary-row_p_t}, and \autoref{eq:ordinary-top-Chern} yields
\begin{equation}\label{eq:ordinary-Chern-identity}
 P_w(\bh,z)=
 \rho_*\left(c_N(\mathscr{H}_0) \, c_L(\mathscr{B}) \, \sum_{k=0}^Lz^{L-k}c_k(\mathscr{A})\right).
\end{equation}
Let $\mathscr{V}:=\mathscr{H}_0\oplus\mathscr{B}$ and $r:=N+L$.
Let $\mathscr{R}_j$ be the globally generated vector bundle on $Z_w$ given as the pullback of the twisted tangent bundle
$$
T_{\PP_\kk^{n-1}} \otimes \OO_{\PP_\kk^{n-1}}(-1) \;\cong\; {\rm Coker}\left(\OO_{\PP_\kk^{n-1}}(-1) \rightarrow \OO_{\PP_\kk^{n-1}}^n\right)
$$ 
on the $(n+1-j)$-th factor of $\PP=\left(\PP_\kk^{n-1}\right)^n$, so that $c_a(\mathscr{R}_j)=h_{n+1-j}^a$.
Consider the polynomial
$$
 \Phi(\bx,z,\tau)
 \;:=\;\int_{Z_w}c_\tau(\mathscr{V})c_z(\mathscr{A})
 \prod_{j=1}^n c_{x_j}(\mathscr{R}_j) \;\in\; \ZZ[\bx, z, \tau].
$$

Finally, by applying the Chern-class criterion of \cite[Proposition~6.2]{CRSegre} to the projection
$Z_w\to\PP^0$, we obtain that 
$$
\mathrm{N}(\Phi) \quad \text{ is a Lorentzian polynomial.}
$$ 
By \autoref{eq:ordinary-reciprocity}, the coefficient of
$\bx^\ba z^k$ in $F_w$ equals that of
$x_n^{n-1-a_1}\cdots x_1^{n-1-a_n}z^{L-k}$ in $P_w$.
Therefore \autoref{eq:ordinary-Chern-identity} and the fact that $A^*(\PP)\cong \ZZ[h_1,\ldots,h_n]/\left(h_1^n,\ldots,h_n^n\right)$ give
$$
 \mathrm{N}(F_w)
 \;=\;\partial_\tau^r\bigl(\mathrm{N}(\Phi)\bigr).
$$
Since derivatives preserve Lorentzianity~\cite{BH}, $\mathrm{N}(F_w)$ is Lorentzian, as claimed. 
\end{proof}

\begin{remark}\label{rem:ordinary-conjugation}
Here we verify the operator identity used in the proof of \autoref{prop:ordinary-support}.
We work in the Laurent polynomial ring
$\ZZ[x_1^{\pm1},\ldots,x_n^{\pm1},z^{\pm1}]$. 
Let
$$
 M \;:=\; (x_1\cdots x_n)^{n-1}
 \qquad \text{and} \qquad
 \mathcal{R}(f)(\bx,z) \;:=\; f(x_n^{-1},\ldots,x_1^{-1},z^{-1}).
$$
We have that $\iota(f)=M\mathcal{R}(f)$ and that $\mathcal{R}$ is a ring involution with $\mathcal{R}(M)=M^{-1}$. 
Since $s_j(M)=M$, we get that $D_j(Mh)=M D_j(h)$. 
It then follows
$$
 \iota\bigl(D_j(\iota(f))\bigr)
 \;=\; M\mathcal{R}\bigl(D_j(M\mathcal{R}(f))\bigr)
 \;=\; M\mathcal{R}(M)\mathcal{R}\bigl(D_j(\mathcal{R}(f))\bigr)
 \;=\; \mathcal{R}\bigl(D_j(\mathcal{R}(f))\bigr).
$$
Fix $1\leq i<n$ and set $j=n-i$. 
We have that 
$$
\mathcal{R}(x_j) \;=\; x_{i+1}^{-1}, \quad \mathcal{R}(x_{j+1}) \;=\; x_i^{-1} \quad \text{ and } \quad \mathcal{R}\bigl(s_j(\mathcal{R}(f))\bigr) \;=\; s_i(f).
$$
By combining everything, we obtain
$$
\begin{aligned}
 z(\iota D_{n-i}\iota)(f)
 &\;=\; z\frac{(x_i^{-1}+z^{-1})f
       -(x_{i+1}^{-1}+z^{-1})s_i(f)}
       {x_{i+1}^{-1}-x_i^{-1}}\\
 &\;=\; \frac{x_{i+1}(x_i+z)f-x_i(x_{i+1}+z)s_i(f)}
        {x_i-x_{i+1}}\\
 &\;=\; \partial_i\bigl(x_{i+1}(x_i+z)f\bigr) \;=\; \Theta_i(f).
\end{aligned}
$$
For a polynomial $f$, the resulting identity lies in $\ZZ[\bx,z]$.
\end{remark}

\section*{Acknowledgments}
The author received support from NSF grant DMS-2502321 and Simons Foundation Travel Support for Mathematicians Award MPS-TSM-00013551.

\medskip

\noindent
\textbf{AI disclosure.}
The author used OpenAI's GPT-6 for assistance with mathematical discussions and the revision of this manuscript. 
The author wrote the manuscript and takes full responsibility for the mathematical content and text.

\bibliographystyle{amsalpha}
\bibliography{double_grothendieck_snp}

@article{LS,
  author = {Lascoux, Alain and Sch{\"u}tzenberger, Marcel-Paul},
  title = {\href{https://igm.univ-mlv.fr/~berstel/Mps/Travaux/A/1982-2HopfCras.pdf}{{Structure de Hopf de l'anneau de cohomologie et de l'anneau de Grothendieck d'une vari{\'e}t{\'e} de drapeaux}}},
  journal = {C. R. Acad. Sci. Paris S{\'e}r. I Math.},
  volume = {295},
  year = {1982},
  pages = {629--633},
  url = {https://igm.univ-mlv.fr/~berstel/Mps/Travaux/A/1982-2HopfCras.pdf}
}

@inproceedings{FK,
  author = {Fomin, Sergey and Kirillov, Anatol N.},
  title = {\href{https://fpsac-archive.github.io/FPSAC94/ARTICLES/21Fomin.pdf}{{Grothendieck polynomials and the Yang--Baxter equation}}},
  booktitle = {{Proceedings of the 6th International Conference on Formal Power Series and Algebraic Combinatorics}},
  address = {New Brunswick, NJ},
  year = {1994},
  pages = {183--190},
  url = {https://fpsac-archive.github.io/FPSAC94/ARTICLES/21Fomin.pdf}
}

@article{Lenart,
  author = {Lenart, Cristian},
  title = {\href{https://doi.org/10.1007/PL00001276}{{Combinatorial aspects of the $K$-theory of Grassmannians}}},
  journal = {Ann. Comb.},
  volume = {4},
  year = {2000},
  number = {1},
  pages = {67--82},
  doi = {10.1007/PL00001276}
}

@article{LRS,
  author = {Lenart, Cristian and Robinson, Shawn and Sottile, Frank},
  title = {\href{https://doi.org/10.1353/ajm.2006.0034}{{Grothendieck polynomials via permutation patterns and chains in the Bruhat order}}},
  journal = {Amer. J. Math.},
  volume = {128},
  year = {2006},
  number = {4},
  pages = {805--848},
  doi = {10.1353/ajm.2006.0034},
  eprint = {math/0405539},
  url = {https://arxiv.org/abs/math/0405539}
}

@article{Buch,
  author = {Buch, Anders Skovsted},
  title = {\href{https://doi.org/10.1007/BF02392644}{{A Littlewood--Richardson rule for the $K$-theory of Grassmannians}}},
  journal = {Acta Math.},
  volume = {189},
  year = {2002},
  number = {1},
  pages = {37--78},
  doi = {10.1007/BF02392644},
  eprint = {math/0004137},
  url = {https://arxiv.org/abs/math/0004137}
}

@article{Weigandt,
  author = {Weigandt, Anna},
  title = {\href{https://doi.org/10.1016/j.jcta.2021.105470}{{Bumpless pipe dreams and alternating sign matrices}}},
  journal = {J. Combin. Theory Ser. A},
  volume = {182},
  year = {2021},
  pages = {105470},
  doi = {10.1016/j.jcta.2021.105470},
  eprint = {2003.07342},
  url = {https://arxiv.org/abs/2003.07342}
}

@book{AF,
  author = {Anderson, David and Fulton, William},
  title = {\href{https://doi.org/10.1017/9781009349994}{{Equivariant cohomology in algebraic geometry}}},
  series = {Cambridge Studies in Advanced Mathematics},
  volume = {210},
  publisher = {Cambridge University Press},
  address = {Cambridge},
  year = {2024},
  doi = {10.1017/9781009349994}
}

@article{CCLMZ,
  author = {Castillo, Federico and Cid-Ruiz, Yairon and Li, Binglin and Monta{\~n}o, Jonathan and Zhang, Naizhen},
  title = {\href{https://doi.org/10.1016/j.aim.2020.107382}{{When are multidegrees positive?}}},
  journal = {Adv. Math.},
  volume = {374},
  year = {2020},
  pages = {107382},
  doi = {10.1016/j.aim.2020.107382},
  eprint = {2005.07808},
  url = {https://arxiv.org/abs/2005.07808}
}

@article{BH,
  author = {Br{\"a}nd{\'e}n, Petter and Huh, June},
  title = {\href{https://doi.org/10.4007/annals.2020.192.3.4}{{Lorentzian polynomials}}},
  journal = {Ann. of Math. (2)},
  volume = {192},
  year = {2020},
  number = {3},
  pages = {821--891},
  doi = {10.4007/annals.2020.192.3.4},
  eprint = {1902.03719},
  url = {https://arxiv.org/abs/1902.03719}
}

@misc{CRSegre,
  author = {Cid-Ruiz, Yairon},
  title = {\href{https://arxiv.org/abs/2507.06424v2}{{Mixed Segre zeta functions and their log-concavity}}},
  year = {2026},
  note = {To appear in Algebra \& Number Theory},
  eprint = {2507.06424v2},
  url = {https://arxiv.org/abs/2507.06424v2}
}

@article{Brion,
  author = {Brion, Michel},
  title = {\href{https://doi.org/10.1016/S0021-8693(02)00505-7}{{Positivity in the Grothendieck group of complex flag varieties}}},
  journal = {J. Algebra},
  volume = {258},
  year = {2002},
  number = {1},
  pages = {137--159},
  doi = {10.1016/S0021-8693(02)00505-7},
  eprint = {math/0105254},
  url = {https://arxiv.org/abs/math/0105254}
}

@incollection{BrionFlags,
  author = {Brion, Michel},
  title = {\href{https://arxiv.org/abs/math/0410240}{{Lectures on the geometry of flag varieties}}},
  booktitle = {{Topics in cohomological studies of algebraic varieties}},
  series = {Trends in Mathematics},
  publisher = {Birkh{\"a}user},
  address = {Basel},
  year = {2005},
  pages = {33--85},
  eprint = {math/0410240},
  url = {https://arxiv.org/abs/math/0410240}
}

@article{CCMM,
  author = {Castillo, Federico and Cid-Ruiz, Yairon and Mohammadi, Fatemeh and Monta{\~n}o, Jonathan},
  title = {\href{https://doi.org/10.1017/fms.2023.101}{{Double Schubert polynomials do have saturated Newton polytopes}}},
  journal = {Forum Math. Sigma},
  volume = {11},
  year = {2023},
  pages = {e100},
  doi = {10.1017/fms.2023.101},
  eprint = {2109.10299},
  url = {https://arxiv.org/abs/2109.10299}
}

@article{CCC,
  author = {Caminata, Alessio and Cid-Ruiz, Yairon and Conca, Aldo},
  title = {\href{https://doi.org/10.1016/j.aim.2023.109361}{{Multidegrees, prime ideals, and non-standard gradings}}},
  journal = {Adv. Math.},
  volume = {435},
  year = {2023},
  pages = {109361},
  doi = {10.1016/j.aim.2023.109361},
  eprint = {2208.07238},
  url = {https://arxiv.org/abs/2208.07238}
}

@misc{CLM,
  author = {Cid-Ruiz, Yairon and Li, Yupeng and Matherne, Jacob P.},
  title = {\href{https://arxiv.org/abs/2411.17572}{{Log-concavity of polynomials arising from equivariant cohomology}}},
  year = {2024},
  eprint = {2411.17572},
  url = {https://arxiv.org/abs/2411.17572}
}

@misc{KMF,
  author = {Castillo, Federico and Cid-Ruiz, Yairon and Mohammadi, Fatemeh and Monta{\~n}o, Jonathan},
  title = {\href{https://arxiv.org/abs/2212.13091v3}{{$K$-polynomials of multiplicity-free varieties}}},
  year = {2025},
  eprint = {2212.13091v3},
  url = {https://arxiv.org/abs/2212.13091v3}
}

@article{Elkik,
  author = {Elkik, Ren{\'e}e},
  title = {\href{https://doi.org/10.1007/BF01578068}{{Singularit{\'e}s rationnelles et d{\'e}formations}}},
  journal = {Invent. Math.},
  volume = {47},
  year = {1978},
  number = {2},
  pages = {139--147},
  doi = {10.1007/BF01578068}
}

@book{EV,
  author = {Esnault, H{\'e}l{\`e}ne and Viehweg, Eckart},
  title = {\href{https://doi.org/10.1007/978-3-0348-8600-0}{{Lectures on vanishing theorems}}},
  series = {DMV Seminar},
  volume = {20},
  publisher = {Birkh{\"a}user},
  address = {Basel},
  year = {1992},
  doi = {10.1007/978-3-0348-8600-0}
}

@book{FOV,
  author = {Flenner, Hubert and O'Carroll, Liam and Vogel, Wolfgang},
  title = {\href{https://doi.org/10.1007/978-3-662-03817-8}{{Joins and intersections}}},
  series = {Springer Monographs in Mathematics},
  publisher = {Springer-Verlag},
  address = {Berlin},
  year = {1999},
  doi = {10.1007/978-3-662-03817-8}
}

@book{FultonIT,
  author = {Fulton, William},
  title = {\href{https://doi.org/10.1007/978-1-4612-1700-8}{{Intersection theory}}},
  series = {Ergebnisse der Mathematik und ihrer Grenzgebiete. 3. Folge},
  volume = {2},
  edition = {Second},
  publisher = {Springer-Verlag},
  address = {Berlin},
  year = {1998},
  doi = {10.1007/978-1-4612-1700-8}
}

@book{FP,
  author = {Fulton, William and Pragacz, Piotr},
  title = {\href{https://doi.org/10.1007/BFb0096380}{{Schubert varieties and degeneracy loci}}},
  series = {Lecture Notes in Mathematics},
  volume = {1689},
  publisher = {Springer-Verlag},
  address = {Berlin},
  year = {1998},
  doi = {10.1007/BFb0096380}
}

@article{HMMSD,
  author = {Huh, June and Matherne, Jacob P. and M{\'e}sz{\'a}ros, Karola and {St. Dizier}, Avery},
  title = {\href{https://doi.org/10.1090/tran/8606}{{Logarithmic concavity of Schur and related polynomials}}},
  journal = {Trans. Amer. Math. Soc.},
  volume = {375},
  year = {2022},
  number = {6},
  pages = {4411--4427},
  doi = {10.1090/tran/8606},
  eprint = {1906.09633v3},
  url = {https://arxiv.org/abs/1906.09633v3}
}

@article{KM,
  author = {Knutson, Allen and Miller, Ezra},
  title = {\href{https://doi.org/10.4007/annals.2005.161.1245}{{Gr{\"o}bner geometry of Schubert polynomials}}},
  journal = {Ann. of Math. (2)},
  volume = {161},
  year = {2005},
  number = {3},
  pages = {1245--1318},
  doi = {10.4007/annals.2005.161.1245}
}

@book{KollaMori,
  author = {Koll{\'a}r, J{\'a}nos and Mori, Shigefumi},
  title = {\href{https://doi.org/10.1017/CBO9780511662560}{{Birational geometry of algebraic varieties}}},
  series = {Cambridge Tracts in Mathematics},
  volume = {134},
  publisher = {Cambridge University Press},
  address = {Cambridge},
  year = {1998},
  doi = {10.1017/CBO9780511662560}
}

@article{Hudson,
  author = {Hudson, Thomas},
  title = {\href{https://doi.org/10.1017/is014005031jkt266}{{A Thom--Porteous formula for connective $K$-theory using algebraic cobordism}}},
  journal = {J. K-Theory},
  volume = {14},
  year = {2014},
  number = {2},
  pages = {343--369},
  doi = {10.1017/is014005031jkt266},
  url = {https://doi.org/10.1017/is014005031jkt266}
}

@article{Magyar,
  author = {Magyar, Peter},
  title = {\href{https://doi.org/10.1007/s000140050071}{{Schubert polynomials and Bott--Samelson varieties}}},
  journal = {Comment. Math. Helv.},
  volume = {73},
  year = {1998},
  number = {4},
  pages = {603--636},
  doi = {10.1007/s000140050071},
  url = {https://doi.org/10.1007/s000140050071}
}

@article{KW,
  author = {Koncki, Jakub and Weber, Andrzej},
  title = {\href{https://doi.org/10.1007/s00208-024-02953-2}{{Hecke algebra action on twisted motivic Chern classes and $K$-theoretic stable envelopes}}},
  journal = {Math. Ann.},
  volume = {391},
  year = {2025},
  pages = {1899--1964},
  doi = {10.1007/s00208-024-02953-2},
  eprint = {2301.12746},
  url = {https://arxiv.org/abs/2301.12746}
}

@article{Li,
  author = {Li, Shiyue},
  title = {\href{https://doi.org/10.1007/s44007-023-00054-1}{{Relative Bott--Samelson varieties}}},
  journal = {La Matematica},
  volume = {2},
  year = {2023},
  number = {2},
  pages = {420--437},
  doi = {10.1007/s44007-023-00054-1},
  eprint = {2011.04814},
  url = {https://arxiv.org/abs/2011.04814}
}

@article{MTY,
  author = {Monical, Cara and Tokcan, Neriman and Yong, Alexander},
  title = {\href{https://doi.org/10.1007/s00029-019-0513-8}{{Newton polytopes in algebraic combinatorics}}},
  journal = {Selecta Math. (N.S.)},
  volume = {25},
  year = {2019},
  number = {5},
  pages = {66},
  doi = {10.1007/s00029-019-0513-8},
  eprint = {1703.02583},
  url = {https://arxiv.org/abs/1703.02583}
}

@book{HH,
  author = {Herzog, J{\"u}rgen and Hibi, Takayuki},
  title = {\href{https://doi.org/10.1007/978-0-85729-106-6}{{Monomial ideals}}},
  series = {Graduate Texts in Mathematics},
  volume = {260},
  publisher = {Springer-Verlag},
  address = {London},
  year = {2011},
  doi = {10.1007/978-0-85729-106-6}
}

@book{Murota,
  author = {Murota, Kazuo},
  title = {\href{https://doi.org/10.1137/1.9780898718508}{{Discrete convex analysis}}},
  series = {SIAM Monographs on Discrete Mathematics and Applications},
  volume = {10},
  publisher = {SIAM},
  address = {Philadelphia},
  year = {2003},
  doi = {10.1137/1.9780898718508}
}

@book{Schrijver,
  author = {Schrijver, Alexander},
  title = {\href{https://link.springer.com/book/9783540443896}{{Combinatorial optimization. Polyhedra and efficiency. Vol.~B}}},
  series = {Algorithms and Combinatorics},
  volume = {24},
  publisher = {Springer-Verlag},
  address = {Berlin},
  year = {2003},
  note = {Matroids, trees, stable sets, Chapters 39--69},
  url = {https://link.springer.com/book/9783540443896}
}

@misc{NW,
  author = {Nguyen-Dang, Khai-Hoan and Wang, Zhenpeng},
  title = {\href{https://arxiv.org/abs/2609.02850}{{Chern flow and Chern moment algebras}}},
  year = {2026},
  eprint = {2609.02850v3},
  url = {https://arxiv.org/abs/2609.02850v3}
}

@article{PSW,
  author = {Pechenik, Oliver and Speyer, David E. and Weigandt, Anna},
  title = {\href{https://doi.org/10.1007/s00029-024-00959-x}{{Castelnuovo--Mumford regularity of matrix Schubert varieties}}},
  journal = {Selecta Math. (N.S.)},
  volume = {30},
  year = {2024},
  number = {4},
  pages = {66},
  doi = {10.1007/s00029-024-00959-x},
  eprint = {2111.10681v1},
  url = {https://arxiv.org/abs/2111.10681v1}
}

@misc{stacks-project,
  author = {The {Stacks Project Authors}},
  title = {\href{https://stacks.math.columbia.edu}{{The Stacks Project}}},
  year = {2026},
  url = {https://stacks.math.columbia.edu}
}

@article{CR_MIXED_MULT,
  author = {Cid-Ruiz, Yairon},
  title = {\href{https://doi.org/10.1016/j.jalgebra.2020.08.037}{{Mixed multiplicities and projective degrees of rational maps}}},
  journal = {J. Algebra},
  volume = {566},
  year = {2021},
  pages = {136--162},
  doi = {10.1016/j.jalgebra.2020.08.037}
}

@book{HARTSHORNE,
  author = {Hartshorne, Robin},
  title = {\href{https://doi.org/10.1007/978-1-4757-3849-0}{{Algebraic geometry}}},
  series = {Graduate Texts in Mathematics},
  volume = {52},
  publisher = {Springer-Verlag},
  address = {New York},
  year = {1977},
  doi = {10.1007/978-1-4757-3849-0}
}

@article{KT,
  author = {Kleiman, Steven and Thorup, Anders},
  title = {\href{https://doi.org/10.1006/jabr.1994.1182}{{A geometric theory of the Buchsbaum--Rim multiplicity}}},
  journal = {J. Algebra},
  volume = {167},
  year = {1994},
  number = {1},
  pages = {168--231},
  doi = {10.1006/jabr.1994.1182}
}

@article{FMS,
  author = {Fink, Alex and M{\'e}sz{\'a}ros, Karola and {St. Dizier}, Avery},
  title = {\href{https://doi.org/10.1016/j.aim.2018.05.028}{{Schubert polynomials as integer point transforms of generalized permutahedra}}},
  journal = {Adv. Math.},
  volume = {332},
  year = {2018},
  pages = {465--475},
  doi = {10.1016/j.aim.2018.05.028},
  eprint = {1706.04935},
  url = {https://arxiv.org/abs/1706.04935}
}

@article{EY,
  author = {Escobar, Laura and Yong, Alexander},
  title = {\href{https://doi.org/10.1016/j.crma.2017.07.003}{{Newton polytopes and symmetric Grothendieck polynomials}}},
  journal = {C. R. Math. Acad. Sci. Paris},
  volume = {355},
  year = {2017},
  number = {8},
  pages = {831--834},
  doi = {10.1016/j.crma.2017.07.003},
  eprint = {1705.07876},
  url = {https://arxiv.org/abs/1705.07876}
}

@article{MSD,
  author = {M{\'e}sz{\'a}ros, Karola and {St. Dizier}, Avery},
  title = {\href{https://doi.org/10.5802/alco.136}{{From generalized permutahedra to Grothendieck polynomials via flow polytopes}}},
  journal = {Algebr. Comb.},
  volume = {3},
  year = {2020},
  number = {5},
  pages = {1197--1229},
  doi = {10.5802/alco.136},
  eprint = {1705.02418},
  url = {https://arxiv.org/abs/1705.02418}
}

@article{HMSSD,
  author = {Hafner, Elena S. and M{\'e}sz{\'a}ros, Karola and Setiabrata, Linus and {St. Dizier}, Avery},
  title = {\href{https://doi.org/10.1137/23M1599082}{{$M$-convexity of vexillary Grothendieck polynomials via bubbling}}},
  journal = {SIAM J. Discrete Math.},
  volume = {38},
  year = {2024},
  number = {3},
  pages = {2194--2225},
  doi = {10.1137/23M1599082},
  eprint = {2306.08597},
  url = {https://arxiv.org/abs/2306.08597}
}

\end{document}